\newcommand{\eps}{{\varepsilon}}
\theoremstyle{plain}
\newtheorem{theorem}{Theorem}
\newtheorem{proposition}[theorem]{Proposition}
\newtheorem{lemma}[theorem]{Lemma}
\theoremstyle{definition}
\newtheorem{remark}[theorem]{Remark}
\newcommand{\qtq}[1]{\quad\text{#1}\quad}
\numberwithin{equation}{section}
\numberwithin{theorem}{section}
\let\Re=\undefined\DeclareMathOperator*{\Re}{Re}
\let\Im=\undefined\DeclareMathOperator*{\Im}{Im}
\def\geq{\geqslant}
\def\leq{\leqslant}
\def\R{\mathbb{R}}
\def\C{\mathbb{C}}
\def\eps{\varepsilon}
\begin{document}

\title[Scattering norm estimate]{Scattering norm estimate near the threshold for the energy-subcritical NLS}

\author[Z. Ma]{Zuyu Ma}
\address{Graduate School of China Academy of Engineering Physics, Beijing 100088, China}
\email{mazuyu23@gscaep.ac.cn}

\subjclass[2020]{35Q55}
\keywords{energy-subcritical, sub-threshold dynamics, scattering norm}
\maketitle

\begin{abstract}
    We consider the focusing energy-subcritical  Schr\"odinger equations. In earlier works by Holmer-Roudenko \cite{holmer}, Duyckaerts-Holmer-Roudenko \cite{duyckaerts2}, Akahori-Nawa \cite{akahori}, Fang-Xie-Cazenave \cite{fang}, Guevara \cite{guevara} and later by Dodson-Murphy \cite{dodson1,dodson2} and Arora-Dodson-Murphy \cite{arora}, they proved that scattering is the only dynamical behavior if the $H^1$ initial data satisfies $M(u_0)^{(1-s_c)/s_c}E(u_0)<M(Q)^{(1-s_c)/s_c}E(Q)$ and $\| u\|^{(1-s_c)/s_c}_{L^2}\| u\|_{\dot{H}^1}<\| Q\|^{(1-s_c)/s_c}_{L^2}\|Q\|_{\dot{H}^1}$, where $Q$ is the ground state. In this paper, we establish asymptotic estimates for the upper bound of the scattering norms as $M(u_0)^{(1-s_c)/s_c}E(u_0)$ approaches the threshold mass-energy threshold $M(Q)^{(1-s_c)/s_c}E(Q)$, which generalizes the work of Duyckaerts-Merle \cite{duyckaerts} on the energy-critical Schr\"odinger equation($s_c=1$).
\end{abstract}

\section{Introduction}
We consider the following nonlinear Schr\"odinger equation (NLS):
\begin{equation}\tag{NLS}\label{1.1}
    \begin{cases}
        iu_t+\Delta u=-|u|^{p-1}u,\\
        u(0)=u_0\in H^{1}(\R^d),
    \end{cases}
\end{equation}
where $1+\frac{4}{d}<p<1+\frac{4}{d-2}$. Here, we regard $(d+2)/(d-2)$ as $\infty$ when $d=1,2$.  

The equation \eqref{1.1} enjoys Hamiltonian structure and satisfies the following three conservation laws:
\begin{align*}
	\text{Mass:}\quad M(u)&=\|u\|_{L^{2}}^{2},
	\\
	\text{Energy:}\quad E(u)&=\frac{1}{2}\|\nabla u\|_{L^{2}}^{2} - \frac{1}{p+1}\|u\|_{L^{p+1}}^{p+1},
	\\
	\text{Momentum:}\quad P(u)&=\Im \int_{\mathbb{R}^{d}} \overline{u(x)} \nabla u(x)\ dx
\end{align*}
Moreover, the equation \eqref{1.1} is invariant under the following symmetries:
\begin{itemize}

\item phase rotation: $\quad e^{i\theta}u,\quad \forall\ \theta\in\R,$

\item translation in space and time: $\quad u(t+t_0, x+x_0),\quad\ \forall\ t_0,x_0\in\R,$

\item time reversion:  $\quad \overline{u(-t,x)},$

\item scaling symmetry:  $\quad u_{\lambda}=\lambda^{\frac{d-2s_c}{2}}u(\lambda^2t,\lambda x),\quad\ \forall\ \lambda>0$,
\end{itemize}
where $s_c=\frac{d}{2}-\frac{2}{p-1}$, named \emph{critical  index}. Obviously, $\|u_{\lambda}\|_{\dot{H}^{s_c}}=\|u\|_{\dot{H}^{s_c}}$, so we call the equation \eqref{1.1} is \emph{$\dot{H}^{s_c}$-critical}.

This equation admits a soliton wave solution $e^{it}Q$ in $H^1$, where $Q$, known as the \emph{ground state}, is the least-energy radial, positive solution to the elliptic equation:
\begin{align*}
	-\Delta Q +Q -Q^{p}=0 \text{ in } \mathbb{R}^{d}.
\end{align*}

 The ground state $Q$ plays a crucial role in classifying the dynamics of solutions
to \eqref{1.1}.
Under the assumption
\[
M(u_{0})^{(1-s_{c})/s_c}E(u_{0})<M(Q)^{(1-s_{c})/s_c}E(Q),
\]
Holmer and Roudenko \cite{holmer} showed the scattering result for $(d,p)=(3,3)$ in the radial setting. The radial assumption was then removed by Duyckaerts, Holmer, and Roudenko \cite{duyckaerts2}. Akahori and Nawa \cite{akahori}, Fang, Xie, and Cazenave \cite{fang}, and Guevara \cite{guevara} showed the scattering for any dimension and inter-critical power.  Their proofs of scattering rely on a concentration compactness and rigidity argument developed by Kenig and Merle \cite{kenig-merle1}, which was originally used to study the energy-critical NLS, and hence also gave an uniform bound for the scattering norm $S(\R)$ of \eqref{1.1}, i.e. 
\[
\sup_{\| u_0\|^{(1-s_c)/s_c}_{L^2}\| u_0\|_{\dot{H}^1}<\| Q\|^{(1-s_c)/s_c}_{L^2}\|Q\|_{\dot{H}^1}}\|u\|_{S(\R)}\leq C\big(M(u)^{(1-s_c)/s_c}E(u)\big).
\]
Finally, Dodson and Murphy \cite{dodson1,dodson2} gave another proof of the scattering result for $(d,p)=(3,3)$. Their method also works for other dimension and power, we refer the readers to \cite{arora}. For the blow-up results, we lead  interested readers to \cite{akahori,du,guevara,Holmer1, Holmer2}. We summarize their results as follows:

\begin{theorem}[Dynamics below the threshold \cite{akahori,du,Holmer1,holmer,Holmer2,duyckaerts2,fang,guevara,dodson1,dodson2,arora}]\label{thm:claener}
Let $d\geq 1$. Let $u_{0}\in H^{1}(\mathbb{R}^{d})$ satisfy $M(u_{0})^{(1-s_{c})/s_c}E(u_{0})<M(Q)^{(1-s_{c})/s_c}E(Q)$, where $0<s_{c}:=\frac{d}{2} -\frac{2}{p-1}<1$. Let $u:I\times\R^d\to\C$ be the maximal-lifespan solution to \eqref{1.1} with $u|_{t=0}=u_0$.
\begin{itemize}
\item[(i)] If $\|u_0\|^{(1-s_c)/s_c}_{L^2}\|\nabla u_0\|_{L^2}<\|Q\|^{(1-s_c)/s_c}_{L^2}\|\nabla Q\|_{L^2}$,  then $I=\R$ and $u$ scatters\footnote{We say $u$ scatters as $t\to\pm\infty$ if there exist $u_\pm\in H^1$ such that $\lim_{t\to\pm\infty}\|u(t)-e^{it\Delta}u_\pm\|_{H^1}=0$, where $e^{it\Delta}$ is the free Schr\"odinger propagator.} in both time directions.
\item[(ii)] If $\|u_0\|^{(1-s_c)/s_c}_{L^2}\|\nabla u_0\|_{L^2}>\|Q\|^{(1-s_c)/s_c}_{L^2}\|\nabla Q\|_{L^2}$, then $\limsup\limits_{t\to\inf I}\|u\|_{H_x^1}\to\infty$ and $\limsup\limits_{t\to\sup I}\|u\|_{H_x^1}\to\infty$. Moreover, if $u_0$ is radial or $|x|u_0\in L^2$, then $I$ is finite. 
\end{itemize} 
\end{theorem}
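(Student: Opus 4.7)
The plan is to combine a variational dichotomy coming from the sharp Gagliardo--Nirenberg inequality with (i) a concentration-compactness-and-rigidity argument for scattering and (ii) a virial/convexity argument for blow-up. First I establish the dichotomy: since the sharp Gagliardo--Nirenberg inequality is saturated at $Q$, the hypothesis $M(u_0)^{1-s_c}E(u_0)^{s_c}<M(Q)^{1-s_c}E(Q)^{s_c}$ implies that the scale-invariant quantity $\|u(t)\|_{L^2}^{1-s_c}\|\nabla u(t)\|_{L^2}^{s_c}$ cannot cross the threshold $\|Q\|_{L^2}^{1-s_c}\|\nabla Q\|_{L^2}^{s_c}$ along the $H^1$-flow of \eqref{1.1}. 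Hence the hypothesis in (i) is preserved (yielding a uniform $H^1$-bound and coercivity), while the hypothesis in (ii) is preserved with a quantitative gap above the ground state gradient.

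For (i), I follow the Kenig--Merle roadmap adapted to the mass-energy scale-invariant quantity. Small data scattering follows from Strichartz estimates. Setting the critical level
\[
\mathcal{E}_c:=\inf\bigl\{M(u_0)^{1-s_c}E(u_0)^{s_c}:u\text{ does not scatter forward in time}\bigr\},
\]
I assume $\mathcal{E}_c<M(Q)^{1-s_c}E(Q)^{s_c}$ for contradiction. A Keraani-type linear profile decomposition, combined with nonlinear profiles and a long-time perturbation lemma for \eqref{1.1}, extracts a minimal non-scattering critical element $u_c$ whose orbit is precompact in $H^1$ modulo the phase, translation, and ($s_c$-weighted) scaling symmetries. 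The variational coercivity forces $u_c$ to remain uniformly $H^1$-bounded with a strictly positive potential defect, which contradicts a localized virial/Morawetz identity, forcing $u_c\equiv 0$. Uniform bounds on the scattering norm $\|u\|_{S(\R)}$ are then a standard by-product. An alternative route, due to Dodson--Murphy and Arora--Dodson--Murphy, bypasses the compactness step and proves scattering directly through a scale-invariant interaction Morawetz estimate combined with radial Sobolev embedding (or its nonradial variant).

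For (ii), the ``above-threshold'' branch of the dichotomy yields a $\delta>0$ such that
\[
8\|\nabla u(t)\|_{L^2}^2-\tfrac{4d(p-1)}{p+1}\|u(t)\|_{L^{p+1}}^{p+1}\leq-\delta
\]
throughout the lifespan. When $|x|u_0\in L^2$, Glassey's virial identity identifies this quantity with $\tfrac{d^2}{dt^2}\int|x|^2|u|^2\,dx$, so $\int|x|^2|u|^2\,dx$ becomes negative in finite time and the solution blows up. For radial data I replace this by the Ogawa--Tsutsumi truncated virial, controlling the remainder via radial Sobolev embedding. For general $H^1$ data without finite variance or radial symmetry (Du--Wu--Zhang and Holmer--Roudenko), the weaker conclusion $\limsup_{t\to\sup I}\|u(t)\|_{H^1_x}=\infty$ is obtained by assuming boundedness, extracting a precompact orbit, and again contradicting it via a rigidity/virial argument symmetric to the one in (i).

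The main obstacle is the rigidity step in (i): the critical element $u_c$ lives at the $H^1$-level but obeys $\dot{H}^{s_c}$-scaling, so the energy-critical virial argument does not apply verbatim. One must combine a Galilean adjustment (to set $P(u_c)=0$) with a carefully localized virial functional whose error terms are absorbed by the precompactness of the trajectory, and exploit the strict gap $M(Q)^{1-s_c}E(Q)^{s_c}-\mathcal{E}_c>0$ in the sharp Gagliardo--Nirenberg inequality to drive the contradiction.
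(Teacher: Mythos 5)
The paper does not prove Theorem~\ref{thm:claener}; it is stated as a summary of known results and the proof is deferred entirely to the cited references, so there is no ``paper's own proof'' to compare your attempt against. Your outline accurately reflects the strategy those references use: the variational dichotomy from the sharp Gagliardo--Nirenberg inequality, the Kenig--Merle concentration-compactness and rigidity scheme (with profile decomposition, nonlinear profiles, a long-time perturbation lemma, and a localized virial/Morawetz contradiction) for the scattering statement~(i), the Dodson--Murphy and Arora--Dodson--Murphy interaction-Morawetz route as an alternative, and the Glassey virial, Ogawa--Tsutsumi truncated virial, and Du--Wu--Zhang/Holmer--Roudenko grow-up arguments for~(ii). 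You also correctly flag the main technical wrinkle relative to the energy-critical template, namely that the critical element obeys $\dot H^{s_c}$-scaling while living at the $H^1$ level, which requires a Galilean normalization of the momentum and a carefully localized virial whose error terms are absorbed by precompactness. At the level of a sketch there is no gap to flag; keep in mind only that the statement here is a citation and that your outline compresses substantial technical work (profile decomposition at intermediate regularity, the perturbation lemma, absorption of virial errors) that is carried out in full in the cited papers.
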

Dynamics at the threshold have also been studied, the first such result was originally established for $(d,p)=(3,3)$ by Ducykaerts and Roudenko \cite{duyckaerts4}, and ultimately generalized to any inter-critical power and any dimensions by Campos, Farah and Roudenko \cite{campos}. In particular, there exists two special solutions $Q^{\pm}$,  namely, heteroclinic orbits.  These solutions satisfy $M(Q^\pm)=M(Q),\ E(Q^{\pm})=E(Q)$ and behave as follows:  the solution $Q^-$ scatters as $t\to-\infty$ and converges to $e^{it}Q$ in $H^1$ as $t\to+\infty$, while $Q^+$ blows up in finite time in the negative time direction and converges to $e^{it}Q$ in $H^1$ as $t\to+\infty$.

\begin{theorem}[Dynamics at the threshold \cite{campos,duyckaerts4}]\label{thm:claener2}
Let $d\geq 1$. Let $u_{0}\in H^{1}(\mathbb{R}^{d})$ satisfy $M(u_{0})^{(1-s_{c})/s_c}E(u_{0})<M(Q)^{(1-s_{c})/s_c}E(Q)$, where $0<s_{c}:=\frac{d}{2} -\frac{2}{p-1}<1$. Let $u:I\times\R^d\to\C$ be the maximal-lifespan solution to \eqref{1.1} with $u|_{t=0}=u_0$.
\begin{itemize}
\item[(i)] If $\|u_0\|^{(1-s_{c})/s_c}_{L^2}\|\nabla u_0\|_{L^2}<\|Q\|^{(1-s_{c})/s_c}_{L^2}\|\nabla Q\|_{L^2}$, then $I=\R$. Furthermore, either $u$ scatters in both time directions or $u=Q^-$ modulo symmetries.
\item[(ii)] If $\|u_0\|^{(1-s_{c})/s_c}_{L^2}\|\nabla u_0\|_{L^2}=\|Q\|^{(1-s_{c})/s_c}_{L^2}\|\nabla Q\|_{L^2}$, then $u=e^{it}Q$ modulo symmetries.
\item[(iii)] If $\|u_0\|^{(1-s_{c})/s_c}_{L^2}\|\nabla u_0\|_{L^2}<\|Q\|^{(1-s_{c})/s_c}_{L^2}\|\nabla Q\|_{L^2}$ and $u_0$ is radial or $|x|u_0\in L^2$, then either $I$ is finite or $u=Q^+$ modulo symmetries. 
\end{itemize}
\end{theorem}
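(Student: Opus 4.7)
The plan is to follow the modulation-plus-concentration-compactness strategy pioneered by Duyckaerts--Merle in the energy-critical setting and adapted by Duyckaerts--Roudenko in the cubic 3D intercritical case. The starting point is a spectral analysis of the linearization of \eqref{1.1} around the soliton $e^{it}Q$: writing a perturbation as $u=e^{it}(Q+h)$ and decomposing $h=h_1+ih_2$, one obtains the matrix operator $\mathcal{L}=\begin{pmatrix}0 & -L_-\\ L_+ & 0\end{pmatrix}$ with $L_+=-\Delta+1-pQ^{p-1}$ and $L_-=-\Delta+1-Q^{p-1}$. Standard spectral theory gives a pair of simple real eigenvalues $\pm e_0$ with exponentially decaying eigenvectors $\mathcal{Y}^\pm$, continuous spectrum on $i\R$, and a finite-dimensional generalized nullspace generated by the phase, translation and scaling symmetries.

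I would then construct the two special solutions $Q^\pm$ by a standard iteration: for a parameter $A\in\R\setminus\{0\}$, build formal expansions $U^A(t,x)=e^{it}\bigl(Q(x)+\sum_{k\geq 1}A^k e^{-ke_0 t}\mathcal{Y}^+_k(x)\bigr)$, where each $\mathcal{Y}^+_k$ solves an inhomogeneous linear problem with source coming from lower-order terms and from the Taylor expansion of the nonlinearity. Truncating at order $K$ so that the residual is $O(e^{-(K+1)e_0 t})$, one then solves the remainder equation on $[T_0,\infty)$ by a contraction in a Strichartz space weighted by $e^{(K+1/2)e_0 t}$. The resulting orbits form two one-parameter families; modding out by time translation collapses each family to a single trajectory, yielding precisely $Q^+$ (from $A>0$) and $Q^-$ (from $A<0$), the sign of $A$ determining whether, backwards in time, the solution crosses into the sub- or supercritical gradient region.

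For classification, I would take a solution $u$ at the threshold $M(u_0)^{1-s_c}E(u_0)^{s_c}=M(Q)^{1-s_c}E(Q)^{s_c}$. Case (ii) is the rigidity in the sharp Gagliardo--Nirenberg inequality: equality forces $u_0=Q$ modulo the full symmetry group. For (i) and (iii), assume $u$ does not scatter (respectively, does not blow up). A Kenig--Merle concentration-compactness argument together with the Pythagorean expansion of mass and energy along a linear profile decomposition isolates a critical element whose orbit is precompact modulo symmetries in $H^1$. Choosing a modulation decomposition $u(t,x)=e^{i\theta(t)}\lambda(t)^{d/2}[Q+h](t,\lambda(t)(x-x(t)))$ with $h$ orthogonal to the generalized null directions, the dynamics of the $\mathcal{Y}^\pm$-projections $\alpha_\pm(t)$ reduce to $\alpha_\pm'=\pm e_0\alpha_\pm+O(\|h\|^2)$. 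Precompactness rules out exponential growth of $\alpha_+$, hence $\alpha_+\equiv 0$ up to a time translation; the remaining coercivity of the linearized energy shows $h\to 0$, forcing $u$ to converge to $e^{it}Q$ in $H^1$ as $t\to\infty$, and the uniqueness built into the contraction then identifies $u$ with $Q^-$ in case (i) and with $Q^+$ in case (iii).

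The main obstacle is the spectral/modulation step in the mass-supercritical but energy-subcritical regime: unlike the energy-critical case, scaling is not a symmetry of the mass or the energy individually, so the modulation parameter $\lambda(t)$ must be driven by the combined $M^{1-s_c}E^{s_c}$ constraint, producing extra coupling terms in the ODEs for $\alpha_\pm$ and $\lambda$ that have to be absorbed into lower-order error. Propagating exponential smallness of $h$ and of $\alpha_+$ under these coupled modulation equations, uniformly as $s_c$ ranges over $(0,1)$, is the technical heart of the argument and is precisely what forced the substantial extension in Campos--Farah--Roudenko beyond the original Duyckaerts--Roudenko treatment of $(d,p)=(3,3)$.
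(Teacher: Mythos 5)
This theorem is quoted from Campos--Farah--Roudenko \cite{campos} and Duyckaerts--Roudenko \cite{duyckaerts4}; the paper itself does not supply a proof, so there is no internal proof to compare against. Evaluating your sketch against those sources: the overall architecture you describe (spectral analysis of the matrix linearization $\mathcal{L}$, construction of $Q^\pm$ by exponentially weighted fixed-point iteration on the tail $[T_0,\infty)$, Kenig--Merle extraction of a compact critical element, and a modulation argument that forces exponential convergence to the soliton and identifies the solution with $Q^-$ or $Q^+$) is the right one and is faithful to the literature.

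However, there is a genuine error in the modulation decomposition you propose. You write $u(t,x)=e^{i\theta(t)}\lambda(t)^{d/2}[Q+h](t,\lambda(t)(x-x(t)))$ and make the scaling parameter $\lambda(t)$ a central modulation variable, then identify the coupling of the $\lambda$-ODE with the $\alpha_\pm$-ODEs as ``the technical heart'' and the distinguishing difficulty of the intercritical case. This is backwards. In the intercritical ($0<s_c<1$) problem the scaling $u\mapsto\lambda^{2/(p-1)}u(\lambda^2 t,\lambda x)$ is \emph{not} a symmetry of the $H^1$ data class (it changes $M$ and $E$ separately), and the soliton manifold is only the finite-parameter family $\{e^{i\theta}Q(\cdot-x_0)\}$. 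Accordingly, the generalized nullspace of $\mathcal{L}$ is spanned by $iQ$ (phase) and $\partial_j Q$ (translations) alone; there is no scaling generator and no parameter $\lambda(t)$ to modulate away. This is reflected in the paper's own decomposition \eqref{condition1}, where $h_n$ is expanded in $e_\pm$, $iQ$, $\partial_j Q$, and a remainder $g_n\in\mathcal{B}^\perp$, with no scaling direction. So the obstacle you identify does not arise; indeed, the absence of a scaling mode (and the fact that $e_\pm$ decay exponentially, unlike the energy-critical case) is one of the simplifications in the intercritical threshold analysis. The genuine technical difficulties in extending \cite{duyckaerts4} to general $(d,p)$ in \cite{campos} lie elsewhere: the spectral theory of $\mathcal{L}$ in arbitrary dimension, non-algebraic nonlinearities when $p$ is not an odd integer, and establishing the coercivity \eqref{positif} on $\mathcal{B}^\perp$ in full generality. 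If you removed the $\lambda(t)$ modulation and replaced the ``main obstacle'' paragraph with these points, the sketch would be accurate.
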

We also remark that for the blow-up part (iii),  Gustafson and Inui \cite{gustafson} proved that for any initial data(not necessarily radial), if $\|u\|_{H^1_x}$ remains bounded  in one time direction, then $u$ must coincide(modulo symmetries) with $Q^{+}$.

In this paper, We will present an exact asymptotic expression of constant 
\[
C(M(u_0)^{(1-s_{c})/s_c}E(u_0)).
\] 
Let 
\begin{equation}
    \mathcal{I}_{\epsilon}:=\sup_{u\in F_{\epsilon}}\int_{\R^d}|u(t,x)|^{\frac{(p-1)(d+2)}{2}} \ dx=\sup_{u\in F_{\epsilon}}\|u\|^{\frac{(p-1)(d+2)}{2}}_{S(\R)},
\end{equation}
where 
\begin{align}
    F_{\epsilon}:&=\bigg\{u \text{  is a solution of \eqref{1.1} such that } M(u)^{(1-s_{c})/s_c}E(u)\leq M(Q)^{(1-s_{c})/s_c}(E(Q)-\epsilon^2)  \notag\\
     &\hspace{30ex}\text{ and }\| u\|^{(1-s_{c})/s_c}_{L^2}\|\nabla u\|_{L^2} < \| Q\|^{(1-s_{c})/s_c}_{L^2}\|\nabla Q\|_{L^2}\bigg\}
\end{align}
Clearly, the existence of soliton wave $e^{it}Q$ yields $\lim_{\epsilon\to 0} \mathcal{I}_{\epsilon}=\infty$. We will  prove that, $\mathcal{I}_{\epsilon}$ will tend to infinity at a rate of $\log \epsilon$ as $\epsilon \to 0$.
\begin{theorem}\label{mainthm}
For any $d\geq1$ and  $1+\frac{4}{d}<p<1+\frac{4}{d-2}$, 
\begin{equation} 
    \lim_{\epsilon\to 0}\frac{\mathcal{I}_{\epsilon}}{|\log \epsilon|}=\frac{2}{\lambda_1},
\end{equation}
where $\lambda_1=\lambda_1(d,p)$ is the unique positive eigenvalue of linearized operator $\mathcal{L}$(See Section \ref{soliton}).
\end{theorem}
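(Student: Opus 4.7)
The plan, following Duyckaerts--Merle for the energy-critical case, is to establish matching upper and lower bounds on $\mathcal{I}_\epsilon/|\log\epsilon|$. The core mechanism is that any $u\in F_\epsilon$ can stay within a fixed $H^1$-neighborhood of the soliton orbit $\mathcal{O}_Q=\{e^{i\theta}Q(\cdot-y):\theta\in\R,\ y\in\R^d\}$ only on a time window of length $(2/\lambda_1+o(1))|\log\epsilon|$; off this window, the concentration-compactness / Kenig--Merle argument underlying Theorem~\ref{thm:claener} yields a uniform scattering bound depending only on the (bounded) mass-energy $M(u)^{1-s_c}E(u)^{s_c}$, contributing only $O(1)$ to $\mathcal{I}_\epsilon$.

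For the lower bound, I would exploit the heteroclinic orbit $Q^-$ from Theorem~\ref{thm:claener2}, which should satisfy an expansion of the form $Q^-(t)-e^{it}Q=e^{-\lambda_1 t}\mathcal{Y}+O(e^{-2\lambda_1 t})$ as $t\to+\infty$ for an eigenfunction $\mathcal{Y}$ of $\mathcal{L}$. Setting $T_\epsilon\sim\lambda_1^{-1}|\log\epsilon|$, construct initial data $\tilde u_{0,\epsilon}$ in an $O(\epsilon)$ $H^1$-ball around $e^{iT_\epsilon}Q$ chosen so that (a) the mass-energy lies $O(\epsilon^2)$ below threshold, placing $u_\epsilon\in F_\epsilon$, and (b) the forward and backward evolutions track translates of $Q^-$ and of its time-reversal, respectively, yielding two-sided scattering. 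The resulting $u_\epsilon$ is $\delta_0$-close to $\mathcal{O}_Q$ on a time interval of length $(2/\lambda_1-o(1))|\log\epsilon|$, and the normalization of $\lambda_1$ with respect to the chosen $S$-norm produces exactly the constant $2/\lambda_1$.

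For the upper bound, I introduce on the near-soliton set $A=\{t:\mathrm{dist}_{H^1}(u(t),\mathcal{O}_Q)<\delta_0\}$ a modulated decomposition $u(t,x)=e^{i\theta(t)}(Q+h(t))(x-y(t))$ with $h$ orthogonal to the tangent directions of $\mathcal{O}_Q$. Decomposing further along the spectral subspaces of $\mathcal{L}$ as $h=\alpha_+(t)\mathcal{Y}_++\alpha_-(t)\mathcal{Y}_-+h^\perp$, I use coercivity of the linearized Hamiltonian on the orthogonal complement and the sub-threshold condition $M(u)^{1-s_c}E(u)^{s_c}\leq M(Q)^{1-s_c}(E(Q)-\epsilon^2)^{s_c}$ to obtain a lower bound $\|h(t)\|_{H^1}\gtrsim\epsilon$ throughout $A$. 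Combined with the ODE $\dot\alpha_\pm=\pm\lambda_1\alpha_\pm+O(\|h\|_{H^1}^2)$ and a standard saddle-passage argument, this forces $|A|\leq(2/\lambda_1+o(1))|\log\epsilon|$; long-time Strichartz perturbation theory around the modulated soliton then gives $\|u\|_{S(A)}^{(p-1)(d+2)/2}\leq|A|\cdot c_Q+o(|\log\epsilon|)$, with $c_Q$ absorbed into the calibration of $\lambda_1$.

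The main obstacle I anticipate is the sharp coercivity estimate converting the combined sub-threshold condition, which mixes mass and energy through the nonlinear exponent $s_c$, into a clean quadratic lower bound $\gtrsim\epsilon^2$ on the modulation remainder after projecting out the unstable direction together with the symmetry orbit. This requires a careful Taylor expansion of $M^{1-s_c}E^{s_c}$ around $Q$ and precise handling of the scaling direction, which is not a symmetry of mass and energy separately, and is the principal technical departure from the $s_c=1$ Duyckaerts--Merle setting. A secondary difficulty is the construction, for the lower bound, of the one-parameter families of sub-threshold solutions realizing the asymptotic on both branches of the unstable mode, which requires the sub-threshold analog of the Duyckaerts--Roudenko / Campos--Farah--Roudenko classification localized near the soliton.
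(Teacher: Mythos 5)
Your overall architecture (a two-sided saddle passage of length $\sim\frac{2}{\lambda_1}|\log\epsilon|$ near the soliton orbit, a modulation plus spectral decomposition driving exponential ODEs for the stable/unstable modes, and an explicit construction for the lower bound) matches the paper, and so does your anticipated coercivity difficulty, which the paper resolves via the energy expansion $E(Q+h)+M(Q+h)=E(Q)+M(Q)+\mathcal{F}(h)+O(\|h\|_{H^1}^3)$ after normalizing $M(u_n)=M(Q)$ by scaling.

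However, there is a genuine gap in your treatment of the off-window contribution. You assert that outside the near-soliton set $A$, ``the concentration-compactness / Kenig--Merle argument underlying Theorem~\ref{thm:claener} yields a uniform scattering bound depending only on the (bounded) mass-energy $M(u)^{1-s_c}E(u)^{s_c}$, contributing only $O(1)$.'' This cannot be right: the bound $C\bigl(M(u)^{1-s_c}E(u)^{s_c}\bigr)$ from Theorem~\ref{thm:claener} is exactly the quantity that degenerates as the mass-energy approaches the threshold --- which is the regime under consideration --- so citing it gives nothing, and the conserved mass-energy does not improve after the exit time. What the paper actually proves is the sharper, nontrivial claim $\|u_n\|_{S(-\infty,T_n^-(\eta))}+\|u_n\|_{S(T_n^+(\eta),+\infty)}\leq C(\eta)$ by contradiction: if this failed along a subsequence, then (since the other side also diverges by \eqref{final2}) Lemma~\ref{compactness} forces $e^{i\theta_n}u_n(T_n^+(\eta),\cdot-x_n)\to Q$ in $H^1$, which via the equation implies $\frac{d}{dt}\alpha_n^-(T_n^+(\eta))\to 0$; but Theorem~\ref{byproc} (specifically \eqref{estqq}) gives $\liminf_n|\frac{d}{dt}\alpha_n^-(T_n^+(\eta))|\geq\lambda_1\eta>0$. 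The derivative lower bound at the exit time is the extra input beyond Kenig--Merle that makes the off-window bound $\eta$-dependent rather than threshold-dependent, and your sketch does not supply it.

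Two secondary remarks. For the lower bound you propose tracking translates of $Q^-$ and its time-reversal; the paper instead constructs explicit two-parameter data $u_{n,0}=(1-b_n)Q-a_n e_+-a_n e_-$ with $M(u_{n,0})=M(Q)$, $\|\nabla u_{n,0}\|_{L^2}<\|\nabla Q\|_{L^2}$ and $\epsilon_n^2\sim b_n\sim a_n^2$, which lands the data directly in the framework of Theorem~\ref{byproc}; the hidden ingredient making the mass constraint solvable at the required order is the $L^2$-orthogonality $\Re\int_{\R^d} Q\,\overline{e_+}\,dx=0$, extracted from the decay asymptotics of $Q^\pm$ in \cite{campos}. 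Finally, the scaling-direction concern you raise is neutralized by normalizing $M(u_n)=M(Q)$ via the scaling symmetry (both $M^{1-s_c}E^{s_c}$ and the $S(\R)$ norm are scaling invariant), after which the modulation only needs the phase and translation parameters $\gamma_{n,0},\gamma_{n,j}$.
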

\begin{remark}
 For the mass-critical case($s_c=0$), it has been proved that $0$ is  the
\emph{unique} eigenvalue of the linearized operator(see \cite{wei83}), 
and the finite time blow-up solution at the threshold is given by the
\emph{pseudo-conformal transformation}(See Merle's pioneering work \cite{M} in $H^1$, and Dodson \cite{D2,D1} in $L^2$). For the small mass, Duyckaerts, Merle and Roudenko \cite{DMR} established an explicit formula regarding  the scattering norm as mass going to zero. Since the argument in this present paper highly relies on the existence of positive and negative eigenvalues,  it is a quite interesting problem that whether there exists such explicit asymptotic formula as mass going to the threshold $M(Q)$.
\end{remark}

Such problem, as far as the authors know, is first investigated by Duyckaerts and Merle \cite{duyckaerts} in the focusing case. In that paper, they studied the energy-critical NLS and nonlinear wave equation (NLW)  in spatial dimension $d\in\{3,4,5\}$,  but with the radial data in the former. Their analysis includes some modulation analysis, and the scattering/classification results when the energy and kinetic energy are less than or equal to those of the ground state. Nowadays the scattering below the threshold has been proved in the non-radial setting(cf. \cite{KV1,dodson3}) when $d\geq4$ and  very recently, \cite{ma} solved the classification (for scattering part) at the  threshold in the non-radial setting within the same dimensional range,  and it is not hard to extend the main theorem in \cite{duyckaerts}(as they expected) to the non-radial case.

\subsection{Outline the  proof of Theorem \ref{mainthm}} 

The basic idea of the proof is similar to that of \cite{duyckaerts}. We first study the properties of a sequence of solutions $\{u_n\}$ that convergence to $Q$. In particular, we decompose $u_n$ as
\[
u_n=e^{it}(Q+h_n),
\]
and then analyze the behavior of $h_n$. By further decompose $h_n$ as
\[
h_n(t)=\alpha_n^+(t)e_{+}+\alpha_n^-(t)e_-+\gamma_{n,0}(t)iQ+\sum_{j=1}^d\gamma_{n,j}(t)\partial_{j}Q+g_n(t),\quad\ g_n(t)\in\mathcal{B}^{\bot},
\]
where $\mathcal{B}^{\bot}$ is defined as in Theorem \ref{positivity}, we further reduce the problem to  the growth of $\alpha_{n}^{\pm}$ before the \emph{exit times} $T_n^{\pm}$, and the crux of our analysis is to prove that $\alpha_n^{\pm}$ will grow at the exponential  speed, which will lead to an exact estimate of the exit times $T_n^{\pm}$(See \eqref{estq} and \eqref{estq1}).  To prove the lower bound, we construct a sequence of solutions $\{u_n\}$ that with the initial data $\{u_{0,n}\}$, and satisfies 
\begin{equation*}
   u_{n}(0)={(1-b_n)Q-a_n e_+-a_ne_-},\quad M(u_{n})=M(Q),\quad \|\nabla u_{n}(0)\|_{L^2}<\|\nabla Q\|_{L^2}
\end{equation*}
\begin{equation*}
 \epsilon_n^2=M(Q)+E(Q)-M(u_{n})-E\left(u_{n}\right),\quad  \quad \epsilon_n^2 \sim b_n\sim a_n^2.
\end{equation*} 
The construction requires the orthogonality of $Q$ and $e_{\pm}$ in  $L^2$, which, however, is not mentioned in \cite{campos} and \cite{duyckaerts4}. We find that this property can be derived from the asymptotic estimates of $Q^{\pm}$. For the upper bound estimate, we first obtain the estimate on the interval $(T_n^-, T_n^+)$ by using Theorem \ref{byproc}.  We then find that the problem can be reduced to proving the uniform boundedness of the scattering norms of a sequence of solutions $\{u_n\}$ on the interval $(-\infty,T_n^-)\cup(T_n^+,\infty)$. Finally, we prove this  via a compactness argument and use the \emph{Kato's Strichartz estimates} to overcome the difficulties that come from the non-local properties of operator $|\nabla|^{s_s}$, and thereby complete the proof of Theorem \ref{mainthm}.

The structure of this paper is as follows:  In Section 2, we collect some notations used in the context and introduce function spaces as well as analysis tools. In Section 3, we analyze the near-soliton dynamics via the linearized operator spectral properties. In Section 4, we finish the proof of our main Theorem.


\section{Notation and useful lemmas}

In this section, we collect some analysis tools and some fundamental results needed throughout the paper.

\subsection{Notation} We begin by setting up some notation. For nonnegative quantities $X$ and $Y$, we write $X\lesssim Y$ to denote the estimate $X\leq C Y$ for some $C>0$. If $X\lesssim Y\lesssim X$, we will write $X\sim Y$. Dependence on various parameters will be indicated by subscripts, e.g. $X\lesssim_u Y$ indicates $X\leq CY$ for some $C=C(u)$. 

 We will use the expression $X\pm$ to denote $X\pm\eps$ for some small $\eps>0$. We will make use of the $L^2$,  $\dot H^1$  and $H^1$ inner products given by $(f,g)_{L^2}=\Re\int_{\R^d} f\bar g\ dx$, $(f,g)_{\dot{H}^1}=(\nabla f,\nabla g)_{L^2}$ and $(f,g)_{H^1}=(f,g)_{L^2}+(\nabla f,\nabla g)_{L^2}$. We will use $A^\perp$ to denote the orthogonal complement of a set $A$. 

For a spacetime slab $I\times\R^d$, we write $L_t^q L_x^r(I\times\R^d)$ for the Banach space of functions $u:I\times\R^d\to\C$ equipped with the norm
\[
\|u\|_{L_t^{q}L_x^r(I\times\R^d)}:=\bigg(\int_I \|u(t)\|_{L_x^r(\R^d)}\,dt\bigg)^{1/q},
\]
with the usual adjustments if $q$ or $r$ is infinity. When $q=r$, we abbreviate $L_t^qL_x^q=L_{t,x}^q$. We also abbreviate $\|f\|_{L_x^r(\R^d)}$ to $\|f\|_{L_x^r}.$ For $1\leq r\leq\infty$, we use $r'$ to denote the dual exponent to $r$, i.e. the solution to $\frac{1}{r}+\frac{1}{r'}=1.$

We then define the fractional differentiation operator $|\nabla|^s=\mathcal{F}^{-1}|\xi|^s\mathcal{F}$, with the corresponding homogeneous Sobolev norm 
\[
\|f\|_{\dot{H}_x^s}:=\||\nabla|^s f\|_{L_x^2},
\]
where $\mathcal{F}$ is the Fourier transformation.

\subsection{Some analysis tools} We denote the free Schr\"odinger propagator by $e^{it\Delta}$.  It is most naturally defined as the Fourier multiplier operator with symbol $e^{-it|\xi|^2}$, but also has a physical-space representation given by convolution with a complex Gaussian. This operator satisfies well-known estimates known as Strichartz estimates, which we state as follows.

First, we call a pair of exponents $(q,r)$  is $\dot{H}^s$\emph{ -admissible} if 
\begin{align*}
\frac{2}{q}+\frac{d}{r}=\frac{d}{2}-s, \qquad \text{ with } \quad 2 \leq q,r \leq \infty  \quad \text{ and }
  \quad(q,r,d) \neq (2,\infty,2);
\end{align*} For a spacetime slab $I\times\R^d$, we define the Strichartz norm 
\[
\|u\|_{\dot{S}^s(I)}:=\sup\big\{\|u\|_{L_t^{q}L_x^r(I\times\R^d)}:(q,r)\text{ is } \dot{H}^s \text{ -admissible}\big\}.
\]
We denote $\dot{S}^s(I)$ to be the closure of all test functions under this norm and write $\dot{N}^{-s}(I)$ for the dual of
$\dot{S}^s(I)$. We then have the following:

\begin{proposition}[Strichartz estimates, \cite{GinibreVelo, KeelTao, Strichartz}]\label{prop1}
Let $s\geq 0$ and suppose that $u:I\times\R^d\to\C$ is a solution to $(i\partial_t+\Delta)u=F$. For any $t_0\in I$, 
\[
\||\nabla|^s u\|_{\dot{S}^0(I)}\lesssim\||\nabla|^s u(t_0)\|_{L_x^2(\R^d)}+\||\nabla|^s F\|_{\dot{N}^0(I)}.
\]
\end{proposition}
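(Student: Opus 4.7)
The plan is to reduce the stated inequality to the classical $s=0$ Strichartz estimate for the free Schr\"odinger flow. First I would apply Duhamel's formula together with the fact that $|\nabla|^{s}$, being a Fourier multiplier, commutes with $e^{it\Delta}$: setting $v=|\nabla|^{s}u$ and $G=|\nabla|^{s}F$, the claim becomes $\|v\|_{\dot S^{0}(I)}\lesssim \|v(t_{0})\|_{L_x^{2}}+\|G\|_{\dot N^{0}(I)}$, so only the standard inhomogeneous Strichartz estimate for $e^{it\Delta}$ needs to be established.

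For the homogeneous piece $\|e^{it\Delta}f\|_{L_t^{q}L_x^{r}}\lesssim \|f\|_{L^{2}}$ with $(q,r)$ an $L^{2}$-admissible pair, I would run the $TT^{*}$ argument. Writing $Tf(t)=e^{it\Delta}f$, the adjoint gives $TT^{*}G(t)=\int e^{i(t-\tau)\Delta}G(\tau)\,d\tau$, and the dispersive estimate $\|e^{it\Delta}\|_{L_x^{1}\to L_x^{\infty}}\lesssim |t|^{-d/2}$ (from the explicit Gaussian kernel) interpolated with unitarity on $L^{2}$ yields $\|e^{i(t-\tau)\Delta}\|_{L_x^{r'}\to L_x^{r}}\lesssim |t-\tau|^{-2/q}$. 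Minkowski in the time variable followed by one-dimensional Hardy--Littlewood--Sobolev then closes the estimate for every admissible pair with $q>2$, and duality converts the $TT^{*}$ bound back to $T$.

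Next I would handle the full inhomogeneous estimate. Composing the dual Strichartz map $G\mapsto \int e^{-i\tau\Delta}G(\tau)\,d\tau$ into $L_x^{2}$ with the homogeneous Strichartz map $L_x^{2}\to L_t^{q}L_x^{r}$ produces the untruncated doubly-admissible bound $\|\int e^{i(t-\tau)\Delta}G(\tau)\,d\tau\|_{L_t^{q}L_x^{r}}\lesssim \|G\|_{L_t^{\tilde q'}L_x^{\tilde r'}}$ for any admissible pairs $(q,r),(\tilde q,\tilde r)$. The Christ--Kiselev lemma then upgrades this to the retarded version with integration on $[t_{0},t]$ whenever $q>\tilde q'$, which covers every pair needed in the paper. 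After taking the supremum over admissible $(q,r)$ on the left and the infimum over admissible $(\tilde q,\tilde r)$ on the right one recovers exactly the $\dot S^{0}/\dot N^{0}$ formulation.

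The main obstacle is the endpoint case $(q,r)=(2,\tfrac{2d}{d-2})$ in dimensions $d\ge 3$, where HLS fails at the critical exponent and Christ--Kiselev no longer applies because one has $q=\tilde q'$. Here I would invoke the bilinear dyadic-in-time argument of Keel--Tao: decomposing the time integration into dyadic shells $|t-\tau|\sim 2^{k}$, combining the $L^{1}\to L^{\infty}$ and $L^{2}\to L^{2}$ bounds via bilinear real interpolation on each shell, and summing geometrically in $k$. For the present application only boundedness is needed, not sharp constants, so no further refinement is required.
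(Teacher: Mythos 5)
The paper does not prove Proposition~\ref{prop1}; it simply cites \cite{GinibreVelo, KeelTao, Strichartz}. Your proposal reproduces the standard proof found in those references — commuting $|\nabla|^s$ through $e^{it\Delta}$ to reduce to $s=0$, the $TT^*$/dispersive/HLS argument for the non-endpoint homogeneous estimate, Christ--Kiselev to pass to the retarded inhomogeneous estimate, and the Keel--Tao bilinear dyadic decomposition for the endpoint $(2,\tfrac{2d}{d-2})$ when $d\ge 3$ — and is correct; it is essentially the same approach the paper delegates to the cited works.
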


We also call a $\dot{H}^s$-admissible pair $(q,r)$ is $\dot{H}^s$\emph{ -Kato admissible }  if it satisfies
\[
\begin{cases}
    \frac{4}{1-2s} \leq q \leq\infty,\quad\frac{2}{1-2s}\leq r \leq\infty,\ \text{ if } d=1\\
    \big(\frac{2}{1-s}\big)^+ \leq q \leq\infty,\quad\frac{2}{1-s}\leq r \leq\big( \big(\frac{2}{1-s}\big)^+\big)'',\ \text{ if } d=2\\
    \big(\frac{2}{1-s}\big)^+ \leq q \leq\infty, \quad\frac{2d}{d-2s}\leq r \leq\big(\frac{2d}{d-2}\big)^- ,\ \text{ if } d\geq 3
\end{cases}
\]
Here, $ (a^+ )''$ is defined as $ (a^+ )'' :=\frac{a^+\cdot a}{a^+-a},$  so that $\frac{1}{a}=\frac1{(a^+ )''} +\frac1{a^+ }$ for any positive real value $a$, with $a^+$ being a fixed number slightly larger than $a$. Then we have the following \emph{Kato's Strichartz estimates} : 
\begin{theorem}[\cite{Kato87, Fos05}]  There exists an universal constant $C>0$ such that for any interval $I$,  $\dot{H}^s$-Kato admissible pair $(q_1,r_1)$ and $\dot{H}^{-s}$-Kato admissible pair $(q_2,r_2)$, we have
\begin{equation}\label{Kato}
\Big\|\int^t_0 e^{i(t-s) \Delta} f(s)\ ds\Big\|_{L_t^{q_1}L_x^{r_1}(I\times\R^d)}\leq C\|f\|_{L_t^{q_2'}L_x^{r_2'}(I\times\R^d)}.
\end{equation}   
\end{theorem}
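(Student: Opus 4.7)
The plan is to reduce \eqref{Kato} to the dispersive decay of $e^{it\Delta}$ combined with Hardy--Littlewood--Sobolev (HLS) fractional integration in time and a Christ--Kiselev step, following the scheme of Kato \cite{Kato87} and Foschi \cite{Fos05}. The starting point is the mixed-norm dispersive bound
\[
\|e^{it\Delta}g\|_{L^{r_1}_x}\lesssim |t|^{-\frac{d}{2}\bigl(\frac{1}{r_2'}-\frac{1}{r_1}\bigr)}\|g\|_{L^{r_2'}_x},
\]
valid for $2\leq r_2'\leq r_1\leq\infty$ by Riesz--Thorin interpolation between $\|e^{it\Delta}\|_{L^2\to L^2}=1$ and the oscillatory kernel bound $\|e^{it\Delta}\|_{L^1\to L^\infty}\lesssim|t|^{-d/2}$. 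Inserting this into the Duhamel integrand gives, pointwise in $t$,
\[
\Big\|\int_0^t e^{i(t-\tau)\Delta}f(\tau)\,d\tau\Big\|_{L^{r_1}_x}\lesssim \int_0^t |t-\tau|^{-\alpha}\|f(\tau)\|_{L^{r_2'}_x}\,d\tau,
\]
where $\alpha:=\frac{d}{2}\bigl(1-\frac{1}{r_1}-\frac{1}{r_2}\bigr)$.

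Next I would take the $L^{q_1}_t$ norm of both sides and momentarily extend the retarded integral to $\int_\R$, at which point the one-dimensional HLS inequality yields
\[
\Big\|\int_\R |t-\tau|^{-\alpha}h(\tau)\,d\tau\Big\|_{L^{q_1}_t}\lesssim \|h\|_{L^{q_2'}_t},
\]
provided $\frac{1}{q_2'}-\frac{1}{q_1}=1-\alpha$. A direct computation shows that this scaling relation is \emph{exactly equivalent} to the combination of the two admissibility identities $\frac{2}{q_1}+\frac{d}{r_1}=\frac{d}{2}-s$ and $\frac{2}{q_2}+\frac{d}{r_2}=\frac{d}{2}+s$, so the $\dot H^{\pm s}$-Kato pairs are tuned precisely to make HLS balance. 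To descend from this non-retarded estimate back to the true Duhamel integral $\int_0^t$, I would invoke the Christ--Kiselev lemma, which applies as soon as $q_2'<q_1$ strictly; the $(\,\cdot\,)^+$ shifts appearing in the Kato ranges for $d\geq2$ are inserted to secure exactly this strict inequality.

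The main obstacle is the bookkeeping in verifying, dimension by dimension, that the prescribed ranges place $(q_i,r_i)$ in the open region where all three tools apply simultaneously: $r_2'\leq r_1$, $0<\alpha<1$, and $1<q_2'<q_1<\infty$. The upper thresholds $\bigl((\tfrac{2}{1-s})^+\bigr)''$ in $d=2$ and $(\tfrac{2d}{d-2})^-$ in $d\geq 3$ encode $\alpha<1$ (equivalently $\frac{1}{r_1}+\frac{1}{r_2}>1-\frac{2}{d}$), while the lower thresholds encode the $\dot H^{\pm s}$ Sobolev scaling at the spatial endpoint; in $d=1$ the range is wider because there is no energy-critical ceiling. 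The endpoint $q_1=\infty$ (or dually $q_2=\infty$) lies outside HLS and must be handled separately, for instance by interpolating a non-endpoint Kato estimate with the trivial energy bound obtained by pulling $\|\cdot\|_{L^\infty_t}$ inside the $\tau$-integral and controlling the resulting time factor via H\"older against a permissible power of $|t-\tau|$. Assembling these pieces recovers \eqref{Kato} in the stated generality.
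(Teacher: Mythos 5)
Your reduction collapses at its first step: the ``mixed-norm dispersive bound'' $\|e^{it\Delta}g\|_{L^{r_1}_x}\lesssim |t|^{-\frac d2(\frac1{r_2'}-\frac1{r_1})}\|g\|_{L^{r_2'}_x}$ is false unless $r_1=r_2$, i.e.\ unless the spatial exponents are actually dual. Riesz--Thorin between $\|e^{it\Delta}\|_{L^2\to L^2}=1$ and $\|e^{it\Delta}\|_{L^1\to L^\infty}\lesssim|t|^{-d/2}$ only produces the dual pairs $L^{p'}\to L^p$; it cannot give an off-diagonal pair $(r_2',r_1)$ with $r_2'\ne r_1'$. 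In fact no fixed-time bound $L^{a}\to L^{b}$ with $a\ne b'$ holds at all: writing $e^{it\Delta}g(x)=c_t\,e^{i|x|^2/4t}\,\widehat{\big(e^{i|\cdot|^2/4t}g\big)}(x/2t)$ shows that such a bound is equivalent to boundedness of the Fourier transform from $L^{a}$ to $L^{b}$, which by scaling forces $\frac1a+\frac1b=1$. Since the whole point of the Kato-type statement is that $r_1\ne r_2$ in general, there is no pointwise-in-$t$ estimate of the Duhamel integrand to feed into one-dimensional Hardy--Littlewood--Sobolev, and the subsequent HLS plus Christ--Kiselev steps have nothing to act on. Your scaling bookkeeping is fine --- the relation $\frac1{q_2'}-\frac1{q_1}=1-\alpha$ is exactly the sum of the two admissibility identities --- but it only shows the estimate is dimensionally consistent, not that it holds.

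This gap is precisely why the inhomogeneous estimates of Kato \cite{Kato87} and Foschi \cite{Fos05} (which the paper does not prove but simply cites) use a different mechanism: one dualizes and estimates the bilinear form $\iint_{s<t}\langle e^{-is\Delta}f(s),\,e^{-it\Delta}g(t)\rangle\,ds\,dt$, decomposes the region $\{s<t\}$ dyadically in $|t-s|$, and on each dyadic strip interpolates between the dual-exponent dispersive bound and the $L^2$-based homogeneous Strichartz estimates, summing the dyadic pieces by means of the scaling relation together with the strict inequalities built into the Kato ranges (your $(\cdot)^+$ and $(\cdot)^-$ shifts). The retarded restriction is handled directly in that bilinear framework (Christ--Kiselev with $q_2'<q_1$ is a legitimate alternative for that one step). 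So your skeleton would suffice only in the special case $r_1=r_2$; to obtain the theorem in the stated generality you need the bilinear dyadic-interpolation argument of the cited references rather than a fixed-time dispersive estimate.
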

One may easily check that for any $d\geq1$, there exist two special $\dot{H}^s$-Kato admissible pair $(q_1,r_1)$ and $(q_2,r_2)$ that satisfy
\[
\frac{p}{p_1}=\frac{1}{q_2'},\quad\text{ and }\quad \frac{p}{r_1}=\frac{1}{r_2'}.
\]
We then define $X(I)$ and $Y(I)$ as $\|u\|_{L_t^{q_1}L_x^{r_1}}$ and $\|u\|_{L_t^{q_2'}L_x^{r_2'}}$ respectively. Then we have
\begin{lemma}\label{perpo}
There exist constant $c>0$ and $C>0$ such that for any two solutions $u,v$ to \eqref{1.1} that are both well-posed on $I$, $0\in I$ and satisfy
\[
\|u(0)-v(0)\|_{\dot{H}^s}+\|u\|_{X(I)}\leq c,
\]
then we have
\begin{equation}\label{boots}
 \|u(t)-v(t)\|_{X(I)}\leq C\|u(0)-v(0)\|_{\dot{H}^s}.   
\end{equation}
\end{lemma}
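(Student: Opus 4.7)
Setting $w = u - v$, the difference satisfies $(i\partial_t + \Delta) w = |v|^{p-1}v - |u|^{p-1}u$ with $w(0) = u(0) - v(0)$. I would work on $I^+ := I \cap [0, \infty)$ and $I^- := I \cap (-\infty, 0]$ separately; by time-reversal symmetry it suffices to handle $I^+$. Combining the homogeneous Strichartz bound $\|e^{it\Delta} w(0)\|_{X(I^+)} \lesssim \|w(0)\|_{\dot{H}^s}$ (valid since $(q_1, r_1)$ is in particular $\dot{H}^s$-admissible, so one can apply $|\nabla|^s$ and Sobolev embed to land in $X$) with the Kato--Strichartz estimate \eqref{Kato} applied to the Duhamel term yields
\[
\|w\|_{X(I^+)} \leq C_0 \|w(0)\|_{\dot{H}^s} + C_1 \big\| |u|^{p-1}u - |v|^{p-1}v \big\|_{L^{q_2'}_t L^{r_2'}_x(I^+)}.
\]

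For the nonlinearity I would use the pointwise inequality $\big| |u|^{p-1}u - |v|^{p-1}v \big| \lesssim (|u|^{p-1} + |v|^{p-1}) |w|$ together with H\"older's inequality. The identities $\frac{p}{q_1} = \frac{1}{q_2'}$ and $\frac{p}{r_1} = \frac{1}{r_2'}$, built into the construction of $(q_1, r_1)$ and $(q_2, r_2)$, allocate one factor of $w$ and $p-1$ factors of $u$ (or $v$) into $X$ in both the time and space variables, giving
\[
\big\| |u|^{p-1}u - |v|^{p-1}v \big\|_{L^{q_2'}_t L^{r_2'}_x(I^+)} \leq C_2 \big( \|u\|_{X(I^+)}^{p-1} + \|v\|_{X(I^+)}^{p-1} \big) \|w\|_{X(I^+)}.
\]
Since $\|v\|_{X(J)} \leq \|u\|_{X(J)} + \|w\|_{X(J)} \leq c + \|w\|_{X(J)}$ on every subinterval $J \subset I^+$, introducing $\Phi(T) := \|w\|_{X([0,T])}$ turns the combined estimate into
\[
\Phi(T) \leq C_0 \|w(0)\|_{\dot{H}^s} + C_3 \big( c^{p-1} + \Phi(T)^{p-1} \big) \Phi(T).
\]

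Finally, $\Phi$ is continuous in $T$ with $\Phi(0) = 0$ (choosing $q_1 < \infty$ from the Kato range, which is always possible), so a standard continuity/bootstrap argument closes the estimate. Choosing $c$ small enough that $C_3 c^{p-1} \leq \tfrac{1}{4}$ and imposing the bootstrap hypothesis $\Phi(T) \leq 2 C_0 \|w(0)\|_{\dot{H}^s}$, the nonlinear contribution $C_3 \Phi(T)^{p-1} \cdot \Phi(T)$ is absorbed into the left-hand side using $p > 1$ and $\|w(0)\|_{\dot{H}^s} \leq c$, yielding $\Phi(T) \leq 2 C_0 \|w(0)\|_{\dot{H}^s}$ uniformly on $I^+$. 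Repeating the argument on $I^-$ gives the desired bound, with $C = 2 C_0$. The main subtlety is the bootstrap itself: only $\|u\|_{X(I)}$ is assumed small, so propagating smallness to $\|v\|_{X}$ depends on controlling $\|w\|_{X}$, which is precisely the unknown, and this forces us to exploit the continuity of $\Phi(T)$ to close the loop.
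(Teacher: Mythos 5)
Your proposal is correct and follows essentially the same route as the paper: you form the difference equation for $w=u-v$, estimate the Duhamel term via Kato's Strichartz estimate \eqref{Kato} together with the pointwise bound $||u|^{p-1}u-|v|^{p-1}v|\lesssim(|u|^{p-1}+|v|^{p-1})|w|$ and H\"older via the exponent relations, and close by a bootstrap/continuity argument after absorbing $\|v\|_{X}^{p-1}$ into $\|u\|_X^{p-1}+\|w\|_X^{p-1}$. The extra detail you supply — the explicit split into $I^\pm$ and the continuous quantity $\Phi(T)$ used to run the continuity argument — is just a fuller version of the paper's brief ``bootstrap argument.''
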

\begin{proof}
    Let $h:=u-v$, then $h$ satisfies the equation
    \[
    ih_t+\Delta h=|v|^{p-1}v-|u|^{p-1}u.
    \]
Combining Kato's Strichartz estimates \eqref{Kato} with the non-linear estimate
\[
\left||v|^{p-1}v-|u|^{p-1}u\right|\lesssim |u-v|[|u|^{p-1}+|v|^{p-1}],
\]
 and Sobolev embedding, we get
\begin{align*}
\|h\|_{X(I)}&\lesssim \|e^{it\Delta}[u(0)-v(0)]\|_{X(I)}+ \|h\|_{X(I)}\cdot[\|h\|_{X(I)}^{p-1}+\|u\|_{X(I)}^{p-1}]\\
& \lesssim \|[u(0)-v(0)]\|_{\dot{H}^{s_c}}+ \|h\|_{X(I)}\cdot[\|h\|_{X(I)}^{p-1}+\|u\|_{X(I)}^{p-1}]
\end{align*}
Therefore, \eqref{boots} can be derived from the bootstrap argument, provided $\|[u(0)-v(0)]\|_{\dot{H}^{s_c}}$ and $\|u\|_{X(I)}$ sufficiently small.
\end{proof}
Next, let us discuss some standard results regarding the equation \eqref{1.1}. For a slab $I\times\R^d$ and a solution $u$ to \eqref{1.1}, we define \emph{scattering norm} (related to $u$) as
\[
S(I) = \|u\|_{L_{t,x}^{\frac{(p-1)(d+2)}{2}}(I\times\R^d)}^{\frac{(p-1)(d+2)}{2}}.
\]  
Clearly $\left(\frac{(p-1)(d+2)}{2},\frac{(p-1)(d+2)}{2}\right)$ is $\dot{H}^s$-admissible, and hence $S(I)\leq \dot{S}^s(I)$. In particular, solutions may be extended in time as long as this norm remains finite, and a global solution with finite $S(\R)$ norm necessarily also admits finite $\dot{S}^s(\R)$ norm and  scatters in $\dot H^s$ in both time directions. Moreover, let $T<\infty$, if $u\in H^1$, then $u$ blows up in this finite time $T$ if and only if $\lim_{t\to T}\|u(t)\|_{H^1}=\infty$, and scatters in $H^1$ if and only if $S(\R)<\infty$.


\section{Analysis near the soliton}\label{soliton}

In the rest of this paper, we will write either $f=f_1+if_2$ or $f={f_1\choose f_2}$ for a complex-valued function $f$ with
real part $f_1$ and imaginary part $f_2$.  Given a solution $u(t)$ of \eqref{1.1}, we decompose $u$ as
\[
u(t,x)=e^{it}[Q(x)+h(t,x)]
\]
and equation \eqref{1.1} yields
\begin{equation}\label{linearized}
    \partial_t h + \mathcal{L}(h) + R(h) = 0,
\end{equation}
where $\mathcal{L}$ is the linearized operator
\begin{equation}\label{defL}
\mathcal{L}:=\left[\begin{array}{cc} 0 & \Delta-1+Q^{p-1} \\ -\Delta+1-pQ^{p-1} & 0 \end{array}\right]
\end{equation}
and $R(h)$ is  the nonlinear term
\[
R(h):=-i|Q+h|^{p-1}(Q+h)+iQ^p + ipQ^{p-1}h_1 - Q^{p-1} h_2. 
\]
The spectral properties of the linearized operator  $\mathcal{L}$ have been studied in detail in \cite{campos, duyckaerts4}. In particular, there exist only two eigenvalues $\pm\lambda_1$(with $\lambda_1>0$) and two complex conjugate eigenvectors $e_{\pm}$, i.e.,
\[
\mathcal{L}e_{\pm}=\pm\lambda_1 e_{\pm},\quad e_{+}=\overline{e_{-}},
\]
and the null space of  $\mathcal{L}$  is spanned by $iQ$ and $\partial_j Q$ for $j\in\{1,\cdots,d\}$.  

We next define the quadratic form $\mathcal{F}(f,g)$ \footnote{Here we have used a notation by abuse of notation, as it is consistent with the symbol for the Fourier transform. We clarify that from this point onward,  the notation $\mathcal{F}$ will always denote this quadratic form.} as 
\[
\mathcal{F}(f,g):=\tfrac12\Im\int_{\R^d} \mathcal{L}f\,\bar g\ dx
\]
and if $f=g$, we abbreviate it as $\mathcal{F}(f)$. One may easily verify that $\mathcal{F}(f,g)=\mathcal{F}(g,f)$, and
\[
\mathcal{F}(e_{\pm})=0, \quad \mathcal{F}(e_{\pm},iQ)=\mathcal{F}(e_{\pm},\partial_jQ)=0
\]
for $j\in\{1,\cdots,d\}$. Moreover, it has been shown in \cite[Remark 2.5]{duyckaerts4} \footnote{They only proved this property with $(d,p)=(3,3)$, the generalization to other dimensions and powers is completely analogous.}   that for any   $f\in H^1\backslash \{\lambda Q,\ \lambda\in \R\}$

\[
\left((-\Delta+1+Q^{p-1})f,f\right)_{L^2}>0.
\]
Noticing that 
\[
\mathcal{F}(e_+,e_-)=\lambda_1(\Re e_{+},\Im e_{+})_{L^2}=-\left((-\Delta+1+Q^{p-1})\Im e_+,\Im e_+\right)_{L^2}<0, 
\]
replacing $e_{\pm}$ by $me_{\pm}$(with $m\neq0$) if necessary, we may also set $\mathcal{F}(e_+,e_-)=-1$ and $\Re\int_{\R^d} \nabla Q \cdot \overline{\nabla e_{\pm}}\ dx>0$.

We also need the following positivity result, which was originally proved for $(d,p)=(3,3)$ in \cite{duyckaerts4} and then extended to any dimension and inter-critical power in \cite{campos}.  We record their results as follows:
\begin{theorem}[\cite{campos,duyckaerts4}]\label{positivity}
    Let $\mathcal{B}^\perp$ denote the set of functions $v\in H^1$ such that
\[
(v,iQ)_{L^2}= (\partial_j Q,v)_{L^2} = \mathcal{F}(v,e_+)=\mathcal{F}(v,e_-) = 0
\]
for $j\in\{1,\cdots,d\}$. Then there exists $c_{d,p}>0$ such that
\begin{equation}\label{positif}
\mathcal{F}(f,f)\geq c_{d,p}\|f\|_{H^1_x} \qtq{for all}f\in \mathcal{B}^\bot. 
\end{equation}
\end{theorem}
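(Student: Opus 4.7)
The plan is to reduce the indefinite form $\mathcal{F}$ to a pair of standard scalar Schr\"odinger operators associated with the ground state, use their well-known spectral structure to establish nonnegativity on $\mathcal{B}^\perp$, and then upgrade nonnegativity to coercivity via a compactness--contradiction argument (cf.\ \cite{campos,duyckaerts4}).

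First, writing $f=f_1+if_2$ with $f_1,f_2$ real, a short computation from \eqref{defL} yields the decoupling
\[
\mathcal{F}(f,f)=\tfrac{1}{2}\langle L_+ f_1, f_1\rangle_{L^2}+\tfrac{1}{2}\langle L_- f_2, f_2\rangle_{L^2},
\]
where $L_+:=-\Delta+1-pQ^{p-1}$ and $L_-:=-\Delta+1-Q^{p-1}$. Both operators are self-adjoint with essential spectrum $[1,\infty)$; one has $L_-\geq 0$ with $\ker L_-=\mathrm{span}\{Q\}$, $\ker L_+=\mathrm{span}\{\partial_1 Q,\dots,\partial_d Q\}$, and $L_+$ has a unique simple negative eigenvalue (Weinstein). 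Expanding $e_\pm=\phi_1\pm i\phi_2$ and decoding $\mathcal{L}e_+=\lambda_1 e_+$ gives $L_+\phi_1=\lambda_1\phi_2$ and $L_-\phi_2=-\lambda_1\phi_1$, so $L_+L_-\phi_2=-\lambda_1^2\phi_2$ identifies $\phi_2$ (up to scalar) with the negative direction. Unpacking the definition of $\mathcal{F}(v,e_\pm)$ in terms of $(v_1,v_2,\phi_1,\phi_2)$ and using these intertwining relations, the conditions $\mathcal{F}(v,e_\pm)=0$ translate into $L^2$-orthogonality of $v_1$ to the $L_+$-negative eigenfunction and of $v_2$ to the remaining direction that could spoil positivity of $L_-$. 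Combined with $(v,iQ)_{L^2}=(v,\partial_j Q)_{L^2}=0$ built into $\mathcal{B}^\perp$, this removes every nonpositive direction from the two quadratic forms above, so $\mathcal{F}(f,f)\geq 0$ on $\mathcal{B}^\perp$, with equality only at $f=0$.

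To upgrade to the coercive bound, I argue by contradiction: if no $c>0$ works, take $f_n\in\mathcal{B}^\perp$ with $\|f_n\|_{H^1}=1$ and $\mathcal{F}(f_n,f_n)\to 0$, and extract a weak $H^1$-limit $f_\infty$. Because $Q$ decays exponentially, multiplication by $Q^{p-1}$ is compact as an operator $H^1\to L^2$, so the lower-order terms $\int Q^{p-1}(f_n)_j^2\,dx$ pass to the strong limit. The defining constraints of $\mathcal{B}^\perp$ are continuous $H^1$ functionals, so $f_\infty\in\mathcal{B}^\perp$ and $\mathcal{F}(f_\infty,f_\infty)\leq 0$, forcing $f_\infty=0$ by the previous step. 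But then
\[
\mathcal{F}(f_n,f_n)=\tfrac{1}{2}\|\nabla f_n\|_{L^2}^2+\tfrac{1}{2}\|f_n\|_{L^2}^2+o(1)\longrightarrow\tfrac{1}{2},
\]
contradicting $\mathcal{F}(f_n,f_n)\to 0$ and yielding the desired $c>0$.

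The main obstacle is the translation step in the second paragraph: $\mathcal{F}$ is indefinite and $\mathcal{F}(v,e_\pm)=0$ is \emph{not} an $L^2$-orthogonality with the eigenvectors themselves. Converting these into the $L^2$-orthogonalities needed to invoke the spectral theory of $L_\pm$ requires careful use of the intertwining relations $L_+\phi_1=\lambda_1\phi_2$, $L_-\phi_2=-\lambda_1\phi_1$, the auxiliary identities $\mathcal{F}(e_\pm,iQ)=\mathcal{F}(e_\pm,\partial_j Q)=0$, and the normalization $\mathcal{F}(e_+,e_-)=-1$; once that bookkeeping is in place, the compactness step is routine.
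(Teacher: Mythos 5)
The paper itself does not prove this theorem; it simply cites \cite{campos,duyckaerts4}, so your proposal is being compared against the argument in those references. Your overall outline — decouple $\mathcal{F}$ into $\langle L_+ f_1,f_1\rangle + \langle L_- f_2,f_2\rangle$, establish nonnegativity on $\mathcal{B}^\perp$, then upgrade to coercivity by a compactness/contradiction argument — is indeed the structure of the proof in Duyckaerts--Roudenko and Campos--Farah--Roudenko, and your compactness paragraph is correct as far as it goes.

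However, there is a genuine gap in the nonnegativity step, and it is precisely the one you flag at the end. The claim that ``$\mathcal{F}(v,e_\pm)=0$ translates into $L^2$-orthogonality of $v_1$ to the $L_+$-negative eigenfunction and of $v_2$ to the remaining direction that could spoil positivity of $L_-$'' is simply false. Unpacking with the intertwining relations gives $\mathcal{F}(v,e_\pm)=0 \iff \int v_1\phi_2 = \int v_2\phi_1 = 0$. But $\phi_2$ is an eigenfunction of the \emph{product} $L_+L_-$ with eigenvalue $-\lambda_1^2$, not an eigenfunction of $L_+$; it is not proportional to the negative ground state of $L_+$. And $L_-\geq 0$ outright, so there is no ``remaining direction'' for $v_2$ to avoid. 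Consequently the sentence ``this removes every nonpositive direction from the two quadratic forms above'' is not justified: $L^2$-orthogonality of $v_1$ to $\phi_2$ does not by itself kill the negative $L_+$-direction, and Weinstein's positivity lemma for $L_+$ requires orthogonality to $Q$ (or to the negative eigenfunction), neither of which is among your constraints. Your closing remark that this ``requires careful use of the intertwining relations\ldots; once that bookkeeping is in place, the compactness step is routine'' understates the issue: there is no bookkeeping that turns $v_1 \perp \phi_2$ into $v_1 \perp Q$ or into orthogonality to the $L_+$-ground state, and trying to force the argument through $L^2$-orthogonalities is the wrong route.

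The way the references actually close this step is via the Morse index of the form $\mathcal{F}$ itself, working with $\mathcal{F}$-orthogonality rather than converting to $L^2$-orthogonality. Since $L_-\geq 0$ and $L_+$ has exactly one negative eigenvalue, $\mathcal{F}$ has negative index exactly one. One checks that $\phi_1 = \tfrac12(e_+ + e_-)$ satisfies $\mathcal{F}(\phi_1,\phi_1) = \tfrac12\mathcal{F}(e_+,e_-) <0$ (this is exactly the computation the paper records when normalizing $\mathcal{F}(e_+,e_-)=-1$). If $f$ is $\mathcal{F}$-orthogonal to $e_\pm$, then it is $\mathcal{F}$-orthogonal to $\phi_1$, and a two-line index argument (if $\mathcal{F}(f,f)<0$ and $\mathcal{F}(f,\phi_1)=0$, then $\mathcal{F}<0$ on the two-dimensional span of $f$ and $\phi_1$, contradicting index one) gives $\mathcal{F}(f,f)\geq 0$ on $\mathcal{B}^\perp$. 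The remaining $L^2$-orthogonality conditions to $iQ$ and $\partial_jQ$ are then used to show the form does not degenerate on $\mathcal{B}^\perp$: a null direction of $\mathcal{F}|_{\mathcal{B}^\perp}$ must satisfy $\mathcal{F}(f,g)=0$ for all $g\in H^1$ (since the complement $\mathrm{span}\{e_\pm,iQ,\partial_jQ\}$ is also $\mathcal{F}$-orthogonal to $f$), hence $f\in\ker\mathcal{L}=\mathrm{span}\{iQ,\partial_jQ\}$, and the $L^2$-orthogonality to that kernel forces $f=0$. With strict positivity established, your compactness paragraph then does deliver the coercive constant. So the fix is to abandon the $L^2$-orthogonality reformulation entirely and argue through the index of $\mathcal{F}$.
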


With the above analysis in place, we now proceed to study a sequence of solutions $\{u_n\}$ satisfying the following conditions:  

\begin{gather}
    \forall\ n>0,\ \|u_n(0)\|^{1-s_c}_{L^2}\|u_n(0)\|^{s_c}_{\dot{H}^1}\leq \|Q\|^{1-s_c}_{L^2}\|Q\|^{s_c}_{\dot{H}^1},\quad\lim_{n\to\infty}\|u_n(0)-Q\|_{H^1_x}=0,\label{assumption}\\
    M(u_n)=M(Q),\quad E(u_n)=E(Q)-\epsilon_n^2,\label{assumption2}
\end{gather}
where $\epsilon_n\to 0 \text{ as } n\to \infty$.  Let $h_n=e^{-it}u_n-Q$ and decompose $h_n$ as
\begin{equation}\label{condition1}
    h_n(t)=\alpha_n^+(t)e_{+}+\alpha_n^-(t)e_-+\gamma_{n,0}(t)iQ+\sum_{j=1}^d\gamma_{n,j}(t)\partial_{j}Q+g_n(t),\quad\ g_n(t)\in\mathcal{B}^{\bot}.
\end{equation}
Clearly, for each $n$, $h_n$ satisfies equation \eqref{linearized}, and
\begin{gather*}
    \alpha_n^{\pm}(t)= -\mathcal{F}(h_n,e_{\mp}),\quad \gamma_{n,0}(t)=\frac{1}{\|Q\|_{L^2}^2}\int_{\R^d} [h_n-\alpha_{n}^+e_{+}-\alpha_n^{-}e_-]\cdot\overline{iQ}\ dx\\
    \gamma_{n,j}(t)=\frac{1}{\|\partial_jQ\|_{L^2}^2}\int_{\R^d} h_n\cdot\overline{\partial_jQ}\ dx,\quad\ \forall\ j\in\{1,\cdots,d\}.
\end{gather*}
We first prove that up to the symmetries,  $\gamma_{n,j}(0)$ can be taken to be zero simultaneously. 

\begin{lemma}\label{analysis1}
There exist sequences $x_n\in\R$ and $\theta_n\in\R$ that converge to zero as $n\to\infty$, and satisfy
\begin{gather*}
    \frac{1}{\|Q\|_{L^2}^2}\int_{\R^d} [e^{i\theta_n}u_n(0,x+x_n)+\mathcal{F}(e^{i\theta_n}u_n(0,x+x_n)-Q,e_-)e_{+}\notag\\
    \quad\quad\quad\quad\quad-\mathcal{F}(e^{i\theta_n}u_n(0,x+x_n)-Q,e_+)e_-]\cdot\overline{iQ}\ dx=0\notag\\
    \frac{1}{\|\partial_jQ\|_{L^2}^2}\int_{\R^d} e^{i\theta_n}u_n(0,x+x_n)\cdot\overline{\partial_jQ}\ dx=0,\quad\ \forall\ j\in\{1,\cdots,d\}.
\end{gather*}
\end{lemma}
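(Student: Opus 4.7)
My plan is to frame this as a straightforward application of the implicit function theorem in $\R^{d+1}$. Introduce the isometric action $(S_{\theta,y}u)(x) := e^{i\theta}u(x+y)$ on $H^1(\R^d)$, parametrized by $(\theta,y)\in\R\times\R^d$, and define a map $\Phi:\R\times\R^d\times H^1\to\R^{d+1}$ whose components are precisely the two integral expressions in the lemma, with $(\theta_n,x_n,u_n(0))$ replaced by free variables $(\theta,y,u)$: the first component is the $iQ$-integral (normalized by $\|Q\|_{L^2}^{-2}$) and the remaining $d$ components are the $\partial_j Q$-integrals (normalized by $\|\partial_j Q\|_{L^2}^{-2}$). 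The conclusion amounts to solving $\Phi(\theta,y,u_n(0))=0$ near $(0,0)$ for each $n$ large.

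The base point $(\theta,y,u)=(0,0,Q)$ satisfies $\Phi(0,0,Q)=0$: the difference $S_{0,0}Q-Q$ vanishes, killing the $\mathcal{F}$-correction pieces, while $\Re\int Q\cdot\overline{iQ}\,dx=0$ (purely imaginary integrand) and $\Re\int Q\cdot\overline{\partial_j Q}\,dx=\tfrac12\int\partial_j(Q^2)\,dx=0$. For the Jacobian $D_{(\theta,y)}\Phi(0,0,Q)$, note that $\partial_\theta S_{\theta,y}Q|_0=iQ$ and $\partial_{y_k}S_{\theta,y}Q|_0=\partial_k Q$. The key structural observation is that $iQ$ and $\partial_k Q$ lie in $\ker\mathcal{L}$, so $\mathcal{F}(iQ,e_\pm)=\mathcal{F}(\partial_k Q,e_\pm)=0$; consequently the derivatives of the $\mathcal{F}$-correction terms inside $\Phi_0$ all vanish at the base point. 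What survives is the matrix whose $(i,k)$-entry is a normalized $\Re$-pairing between a tangent vector ($iQ$ or $\partial_k Q$) and a test vector ($iQ$ or $\partial_j Q$). A direct calculation, using that $Q$ and its partial derivatives are real-valued (so cross terms between $iQ$ and $\partial_\ell Q$ have purely imaginary integrands) and that $\int\partial_j Q\cdot\partial_k Q\,dx=0$ for $j\neq k$ (parity/radiality of $Q$), collapses this matrix to the identity $I_{d+1}$.

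The remainder is routine: $\Phi$ is $C^1$ in $(\theta,y)$ and continuous in $u$, so the implicit function theorem furnishes a neighborhood $U$ of $Q$ in $H^1$ and continuous maps $\theta(\cdot),y(\cdot)$ on $U$ vanishing at $Q$ with $\Phi(\theta(u),y(u),u)=0$. Assumption \eqref{assumption} places $u_n(0)$ in $U$ for all $n$ large, so $\theta_n:=\theta(u_n(0))$ and $x_n:=y(u_n(0))$ converge to $0$ and realize the required identities. The only genuinely substantive step is invoking $\ker\mathcal{L}=\mathrm{span}\{iQ,\partial_1 Q,\ldots,\partial_d Q\}$ to eliminate the derivatives of the $\mathcal{F}$-correction terms; the rest is bookkeeping involving the parity structure of the real-valued ground state.
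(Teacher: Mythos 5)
Your proposal is correct and follows essentially the same route as the paper: the paper defines the analogous map $J$, notes $J(Q,0,0)=0$, asserts the $(\theta,\tilde x)$-Jacobian at that point is invertible, and invokes the implicit function theorem. You have merely supplied the details the paper elides — in particular, the use of $iQ,\partial_j Q\in\ker\mathcal L$ to kill the derivatives of the $\mathcal F$-correction terms and the parity/radiality computation showing the Jacobian is the identity — and these checks are accurate.
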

\begin{proof}
We define the mapping $J: (u,\theta, \tilde{x})\to(J_0,J_1,\cdots,J_d)$, where
\begin{gather*}
    J_0: (u,\theta,\tilde{x})\rightarrow\frac{1}{\|Q\|_{L^2}^2}\int_{\R^d} [e^{i\theta}u(0,x+\tilde{x})+\mathcal{F}(e^{i\theta}u(0,x+\tilde{x})-Q,e_-)e_{+}\\
    \quad\quad\quad\quad\quad-\mathcal{F}(e^{i\theta}u(0,x+\tilde{x})-Q,e_+)e_-]\cdot\overline{iQ}\ dx\\
    J_j:(u,\theta,\tilde{x})\rightarrow\frac{1}{\|\partial_jQ\|_{L^2}^2}\int_{\R^d} e^{i\theta}u(0,x+\tilde{x})\cdot\overline{\partial_jQ}\ dx\quad\ \forall\ j\in\{1,\cdots,d\}.
\end{gather*}
  One can easily check that $J(Q,0,0)=0$ and that the Jacobian matrix of $J$ is invertible at point $(Q,0,0)$. Therefore, Lemma \ref{analysis1} follows from  \eqref{assumption} and the implicit function theorem.
\end{proof}
Replacing $u_n(t,x)$ by $e^{i\theta}u(t,x+x_n)$, we now also assume that 
\begin{equation}\label{condition2}
\gamma_{n,j}(0)=0\quad \text{ for }\quad  j\in\{0,\cdots,d\}.
\end{equation}

The rest of this section is devoted to  prove the following theorem, 
which plays a key role in the proof of Theorem \ref{mainthm}.  Roughly speaking, the exit times $T_n^{\pm}$ will eventually dominated by  the initial data $\alpha_n^{\mp}(0)$.

\begin{theorem}
\label{byproc}
There exist a (universal)\footnote{In the remaining part of this section, when we refer to ``universal'', it means (the property holds) for any sequence $\{u_n\}$ that satisfies \eqref{assumption}--\eqref{condition2}.  
} constant $\eta_0$, such that for all $\eta\in (0,\eta_0)$,  if $|\alpha_n^-(0)|\geq| \alpha_n^+(0)|,\ \forall\ n\geq 0$,
then for large $n$, 
\begin{align*}
T_n^+(\eta)&=\inf\big\{t\geq 0 \;:\;\,|\alpha_n^-(t)|\geq \eta\big\}
\end{align*}
is finite and
\begin{equation}
\label{estq}
\lim_{n\rightarrow +\infty} \frac{T_n^+(\eta)}{\log |\alpha_n^-(0)|}=\frac{1}{\lambda_1}.
\end{equation}
Furthermore,
\begin{equation}
\label{estqq}
\liminf_{n\rightarrow +\infty} |\frac{d}{dt}\alpha_n^{-}(T_n^+(\eta))|\geq \eta\lambda_1.
\end{equation} 
Similarly, if $|\alpha_n^-(0)|\leq| \alpha_n^+(0)|,\ \forall\ n\geq 0$, 
then for large $n$, 
\begin{align*}
   T_n^-(\eta)&=\sup\big\{t\leq 0 \;:\;\,|\alpha_n^+(t)|\geq \eta\big\} 
\end{align*}
is finite and
\begin{equation}
\label{estq1}
\lim_{n\rightarrow +\infty} \frac{|T_n^-(\eta)|}{\log |\alpha_n^+(0)|}=\frac{1}{\lambda_1}.
\end{equation}
Furthermore,
\begin{equation}
\label{estqq1}
 \liminf_{n\rightarrow +\infty} |\frac{d}{dt}\alpha_n^+(T_n^-(\eta))|\geq {\eta\lambda_1}.
\end{equation} 
\end{theorem}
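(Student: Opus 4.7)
The plan is to view \eqref{linearized} as a perturbation of the linear flow generated by $-\mathcal{L}$, which decouples in the spectral decomposition of $\mathcal{L}$ into an unstable mode $\alpha_n^-$ (eigenvalue $+\lambda_1$ of $-\mathcal{L}$), a stable mode $\alpha_n^+$ (eigenvalue $-\lambda_1$), the null coordinates $\gamma_{n,0},\ldots,\gamma_{n,d}$, and the coercive remainder $g_n\in\mathcal{B}^\perp$. Combining the spectral structure with the coercivity of Theorem \ref{positivity} and conservation of mass/energy reduces the problem to an almost-linear ODE system for $(\alpha_n^+,\alpha_n^-)$, from which the exit-time rate is read off.

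First I would derive the modulation ODEs. Differentiating $\alpha_n^\pm(t)=-\mathcal{F}(h_n(t),e_\mp)$, substituting \eqref{linearized}, and using the antisymmetry $\mathcal{F}(\mathcal{L}f,g)+\mathcal{F}(f,\mathcal{L}g)=0$ (a consequence of the Hamiltonian structure of \eqref{1.1}) together with $\mathcal{L}e_\mp=\mp\lambda_1 e_\mp$ and the normalization $\mathcal{F}(e_+,e_-)=-1$ yields
\[
\tfrac{d}{dt}\alpha_n^\pm(t) \;=\; \mp\lambda_1\,\alpha_n^\pm(t)+\mathcal{F}\bigl(R(h_n)(t),e_\mp\bigr), \qquad \bigl|\mathcal{F}(R(h_n),e_\mp)\bigr|\lesssim\|h_n\|_{H^1}^2,
\]
the remainder bound following from the pointwise estimate $|R(h)|\lesssim Q^{p-2}|h|^2+|h|^p$ and Sobolev embedding. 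An analogous computation against $iQ$ and $\partial_j Q$ gives $\dot\gamma_{n,k}(t)=O(\|h_n(t)\|_{H^1}^2)$. To control $\|h_n\|_{H^1}$ I would use the conservation of $\Lambda(u):=E(u)+\tfrac12 M(u)$: Taylor-expanding around $Q$ (the first-order variation vanishes by the ground-state equation) gives $\Lambda(u_n)-\Lambda(Q)=\mathcal{F}(h_n)+O(\|h_n\|_{H^1}^3)=-\epsilon_n^2$, and inserting \eqref{condition1} together with the orthogonalities $\mathcal{F}(e_\pm)=\mathcal{F}(e_\pm,iQ)=\mathcal{F}(e_\pm,\partial_j Q)=0$ and $\mathcal{F}(g_n,e_\pm)=0$ collapses this to
\[
\mathcal{F}(g_n(t))-2\alpha_n^+(t)\alpha_n^-(t) \;=\; -\epsilon_n^2 + O\bigl(\|h_n(t)\|_{H^1}^3\bigr).
\]
Coercivity \eqref{positif}, together with the integrated bound $|\gamma_{n,k}(t)|\lesssim t\sup_{[0,t]}\|h_n\|_{H^1}^2$ coming from $\gamma_{n,k}(0)=0$, gives the a priori estimate $\|g_n(t)\|_{H^1}^2\lesssim\epsilon_n^2+|\alpha_n^+(t)\alpha_n^-(t)|+(1+t)^2\sup_{[0,t]}\|h_n\|_{H^1}^3$.

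Next I would run a bootstrap on $[0,T_n^+(\eta)]$ under the hypothesis $\max(|\alpha_n^\pm|,\|g_n\|_{H^1})\leq K\eta$ with $K$ a large fixed constant. The quadratic forcing in the ODEs is then $O(\eta^2)$, so Gronwall applied to the stable equation yields $|\alpha_n^+(t)|\lesssim|\alpha_n^+(0)|e^{-\lambda_1 t}+\eta^2/\lambda_1$, which is $o(|\alpha_n^-(t)|)$ once the unstable mode has grown past $\eta^{3/2}$. Consequently $|\alpha_n^+\alpha_n^-|=O(\epsilon_n^2)+o(|\alpha_n^-|^2)$, and re-injecting into the unstable equation produces
\[
\tfrac{d}{dt}\alpha_n^-(t)\;=\;\lambda_1\,\alpha_n^-(t)\bigl(1+o(1)\bigr)\qquad (n\to\infty,\ \eta\to 0),
\]
which integrates to $|\alpha_n^-(t)|=|\alpha_n^-(0)|\exp(\lambda_1 t(1+o(1)))$ on the bootstrap interval. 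Setting this equal to $\eta$ at $T_n^+(\eta)$ gives $T_n^+(\eta)=\lambda_1^{-1}\log(\eta/|\alpha_n^-(0)|)(1+o(1))$; dividing by $|\log|\alpha_n^-(0)||$ and using $|\alpha_n^-(0)|\leq\|h_n(0)\|_{H^1}\to 0$ from \eqref{assumption} yields \eqref{estq}. The derivative estimate \eqref{estqq} is immediate from reading the unstable ODE at $t=T_n^+(\eta)$, while \eqref{estq1}--\eqref{estqq1} follow by the time-reversal $t\mapsto-t$, which swaps the roles of $e_+$ and $e_-$ (hence of the stable and unstable modes).

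The main technical obstacle is closing the bootstrap consistently over the exponentially long window $T_n^+(\eta)\sim|\log\epsilon_n|$. The null-space drift contributes errors of size $T_n^+(\eta)\cdot\eta^2$, which must remain strictly below $\eta$; this forces the universal threshold $\eta_0$ to satisfy $\eta^2|\log\eta|\ll\eta$, i.e.\ $\eta_0$ sufficiently small. A further delicate point is that the cubic remainders $O(\|h_n\|_{H^1}^3)$ in the conservation identity must be absorbed into $\mathcal{F}(g_n)$ and $\alpha_n^+\alpha_n^-$ throughout the bootstrap region, which again requires $\eta_0$ small and, crucially, uses that $g_n(t)\in\mathcal{B}^\perp$ at every time by construction of the decomposition \eqref{condition1}.
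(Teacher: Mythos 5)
Your overall strategy coincides with the paper's: differentiate $\alpha_n^\pm=-\mathcal{F}(h_n,e_\mp)$, use the skew-symmetry of $\mathcal{F}(\mathcal{L}\cdot,\cdot)$ to produce the modulation ODE $\frac{d}{dt}\alpha_n^\pm=\mp\lambda_1\alpha_n^\pm+O(\|h_n\|_{H^1}^2)$, expand the conserved Hamiltonian around $Q$ to relate $\mathcal{F}(g_n)$, $\alpha_n^+\alpha_n^-$ and $\epsilon_n^2$, invoke the coercivity \eqref{positif}, and then run a bootstrap/Gronwall argument to read off the exponential growth of the unstable mode and hence the exit time. Your choice to expand $E+\tfrac12M$ rather than $E+M$ (so that the first variation at $Q$ vanishes identically, without appealing to mass conservation) is a slight simplification and is correct. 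The derivative estimates \eqref{estqq}, \eqref{estqq1} then follow exactly as you say.

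The genuine gap is in your treatment of the null-direction modulation parameters $\gamma_{n,j}$. You bound them by $|\gamma_{n,j}(t)|\lesssim t\,\sup_{[0,t]}\|h_n\|_{H^1}^2$, which grows linearly in $t$, and you propose to absorb the resulting error by shrinking $\eta_0$ so that $\eta^2|\log\eta|\ll\eta$. That remedy misidentifies the length of the bootstrap window: $T_n^+(\eta)\sim\lambda_1^{-1}|\log\epsilon_n|\to\infty$ for each fixed $\eta$, so $T_n^+(\eta)\cdot\eta^2\to\infty$ as $n\to\infty$ and the bound $t\,\eta^2$ never stays below $\eta$ (or below $|\alpha_n^-(t)|\leq\eta$) near the exit time. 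With this bound the bootstrap does not close. Two further points. First, your claimed ODE $\dot\gamma_{n,k}=O(\|h_n\|_{H^1}^2)$ is not obviously correct: projecting \eqref{linearized} onto $iQ$ and $\partial_jQ$ leaves, besides $\mathcal{F}(R(h_n),\cdot)$, the linear contributions from $\Im\int\mathcal{L}(\alpha_n^\pm e_\pm+g_n)\overline{iQ}$ (resp. $\partial_jQ$), which are generically $O(|\alpha_n^\pm|+\|g_n\|_{H^1})$ rather than quadratic; the paper keeps these linear terms. Second, the correct way to integrate, and the step your argument is missing, is to observe that after the ODE estimate one has $|\gamma_{n,j}'(t)|\lesssim|\alpha_n^-(t)|\sim|\tfrac{d}{dt}\alpha_n^-(t)|$ on $[0,t_n^+)$, and that $\alpha_n^-$ is strictly monotone there (this monotonicity, established in the paper via a separate bootstrap on the sign of $\alpha_n^-\cdot\tfrac{d}{dt}\alpha_n^-$, is what makes the integration sharp). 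Integrating then gives $|\gamma_{n,j}(t)|\lesssim|\alpha_n^-(t)-\alpha_n^-(0)|\lesssim|\alpha_n^-(t)|$, i.e. the null modes are slaved to the current size of the unstable mode, not to the elapsed time. (If the linear terms in $\gamma'$ did vanish, you could instead integrate the quadratic bound against the exponential profile $|\alpha_n^-(s)|\lesssim|\alpha_n^-(0)|e^{\lambda_1^+s}$ to get $|\gamma_{n,j}(t)|\lesssim|\alpha_n^-(t)|^2$, which is even stronger; the crude bound $t\cdot\sup\|h_n\|^2$ throws this structure away.) Without one of these two integrations your bootstrap hypothesis $\max(|\alpha_n^\pm|,\|g_n\|_{H^1})\leq K\eta$ cannot be propagated up to $T_n^+(\eta)$, and \eqref{estq}, \eqref{estq1} do not follow.
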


We start with the following two lemmas, which claim that the growth of $\|h_n\|_{H^1_x}$ will be dominated by $\alpha_n^{-}$($\alpha_n^{+}$) forward(backward) in time direction before time $T_n^{+}$($T_n^{-}$), and $\alpha_n^{-}$($\alpha_n^{+}$) will grow at the exponential speed, which will eventually  lead to the explicit estimate  of the exit times(i.e. \eqref{estq} and \eqref{estq1}). First, let us show that $\|h_n(0)\|_{H^1}$ can be bounded by $\max\{|\alpha_n^{+}(0)|,|\alpha_n^-(0)|\}$.
\begin{lemma}
\label{hnbound}
There exists a (universal) constant $M_0>0$ such that 
\[\alpha_n^+(0)\alpha_n^-(0)\neq 0,\quad \alpha_n^+(0)\alpha_n^-(0)>0\ \text{ and }\ \limsup_{n\rightarrow +\infty} \frac{\|h_n(0)\|_{H^1}+\epsilon_n}{\max\{\left|\alpha_n^+(0)\right|,\left|\alpha_n^-(0)\right|\}}\leq M_0.
\]

\end{lemma}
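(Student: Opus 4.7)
The plan is to exploit a Taylor expansion of the combined functional $F(u):=E(u)+\tfrac12 M(u)$ around the ground state $Q$. Since $-\Delta Q+Q=Q^p$, $Q$ is a critical point of $F$, and a direct expansion combined with Sobolev embedding gives
\[
F(Q+h)-F(Q)=\mathcal{F}(h,h)+R(h),\qquad |R(h)|\le C\|h\|_{H^1}^{2+\sigma}
\]
for some $\sigma=\sigma(d,p)>0$. The conservation laws $M(u_n)=M(Q)$ and $E(u_n)=E(Q)-\epsilon_n^2$ yield $F(u_n(0))-F(Q)=-\epsilon_n^2$. Substituting the decomposition \eqref{condition1}--\eqref{condition2} into $\mathcal{F}(h_n(0),h_n(0))$ and invoking $\mathcal{F}(e_\pm,e_\pm)=0$, $\mathcal{F}(e_+,e_-)=-1$, $\mathcal{F}(e_\pm,iQ)=\mathcal{F}(e_\pm,\partial_j Q)=0$ together with $g_n(0)\in\mathcal{B}^\perp$ produces the key identity
\[
2\alpha_n^+(0)\alpha_n^-(0)=\epsilon_n^2+\mathcal{F}(g_n(0),g_n(0))+R(h_n(0)).
\]

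Write $H_n:=\|h_n(0)\|_{H^1}$, $A_n:=\max(|\alpha_n^+(0)|,|\alpha_n^-(0)|)$ and $G_n:=\|g_n(0)\|_{H^1}$. Continuity of $\mathcal{F}(\cdot,e_\mp)$ on $H^1$ yields $A_n\lesssim H_n$, and the triangle inequality gives $G_n\lesssim H_n+A_n\lesssim H_n$, so $H_n\sim A_n+G_n$. To prove the bound $H_n+\epsilon_n\lesssim A_n$ I argue by contradiction: if $G_n/A_n\to\infty$ along a subsequence then $H_n\sim G_n$ and $R(h_n(0))=O(H_n^{2+\sigma})=o(G_n^2)$, so combining with Theorem~\ref{positivity} and the key identity gives $2\alpha_n^+(0)\alpha_n^-(0)\ge (c/2)G_n^2$, contradicting $|\alpha_n^+(0)\alpha_n^-(0)|\le A_n^2\ll G_n^2$. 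An entirely analogous argument rules out $\epsilon_n/A_n\to\infty$, completing the bound.

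The main obstacle is the positivity $\alpha_n^+(0)\alpha_n^-(0)>0$, since the key identity alone only yields $\alpha_n^+(0)\alpha_n^-(0)\ge -CH_n^{2+\sigma}$: the cubic remainder has no definite sign. I would absorb it via the sub-threshold kinetic-energy constraint $\|u_n(0)\|_{\dot H^1}<\|Q\|_{\dot H^1}$. A short manipulation using mass conservation and $-\Delta Q+Q=Q^p$ produces $\int_{\R^d} Q^p\,\Re h_n(0)\,dx\le -\tfrac12 H_n^2$. The crucial new input of the present paper---the $L^2$-orthogonality of $Q$ and $iQ$ to $e_\pm$, extracted from the asymptotic behaviour of the heteroclinic orbits $Q^\pm$---combined with the eigenvalue relation $\mathcal{L}e_\pm=\pm\lambda_1 e_\pm$ and the identity $(-\Delta+1-pQ^{p-1})Q=(1-p)Q^p$ forces $\int_{\R^d} Q^p\,\Re e_\pm\,dx=0$, reducing the inequality to $\int_{\R^d} Q^p\,\Re g_n(0)\,dx\le -\tfrac12 H_n^2$. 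The resulting lower bound on $G_n$, fed back into the key identity, is intended to dominate the cubic remainder and deliver strict positivity of the right-hand side.
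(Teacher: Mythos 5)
Your derivation of the key identity
\[
2\alpha_n^+(0)\alpha_n^-(0)=\epsilon_n^2+\mathcal{F}(g_n(0),g_n(0))+R(h_n(0))
\]
and the contradiction argument that forces $G_n+\epsilon_n\lesssim A_n$ (hence $H_n+\epsilon_n\lesssim A_n$, since $H_n\sim A_n+G_n$) is the same mechanism the paper uses; the paper just phrases it as a direct chain of inequalities rather than a dichotomy, and writes a cubic remainder $O(\|h_n\|_{H^1}^3)$ where you more carefully allow $\|h_n\|_{H^1}^{2+\sigma}$. That part of your proposal is correct and matches the paper.

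The gap is in the positivity step, and it is not closed by the route you sketch. From the sub-threshold and mass constraints you do correctly obtain $\int Q^p\,\Re h_n(0)\le -\tfrac12 H_n^2$, and the $L^2$-orthogonalities $\int Q\,\Re e_\pm=0$, $\int Q\,\Im e_\pm=0$ together with $L_+Q=(1-p)Q^p$ and $L_+\Re e_\pm=\pm\lambda_1\Im e_\pm$ do give $\int Q^p\Re e_\pm=0$, hence $\int Q^p\Re g_n(0)\le-\tfrac12 H_n^2$ and therefore $G_n\gtrsim H_n^2$. But feeding this back into the key identity yields only $\mathcal{F}(g_n(0))\gtrsim G_n^2\gtrsim H_n^4$, which is strictly \emph{weaker} than the remainder $|R(h_n(0))|\lesssim H_n^{2+\sigma}$: since $\sigma\le 1$ for any admissible $p$ (the fourth derivative of $|z|^{p+1}$ is not bounded near $z=Q$ unless $p\ge3$, and even then one only gets $\sigma=1$), one has $H_n^4=o(H_n^{2+\sigma})$, so the quadratic form cannot absorb the remainder this way. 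In a regime where $\epsilon_n$ and $G_n$ are both $o(A_n^{1+\sigma/2})$, your inequality $2\alpha_n^+(0)\alpha_n^-(0)\ge\epsilon_n^2+cG_n^2-CH_n^{2+\sigma}$ has a negative right-hand side and says nothing about the sign of $\alpha_n^+(0)\alpha_n^-(0)$. (For what it is worth, the paper's own proof of this lemma silently drops the $O(\|h_n\|^3)$ term at exactly this point, so it too does not rigorously deliver $\alpha_n^+(0)\alpha_n^-(0)>0$ as written; you have correctly spotted that this is the delicate step, but the fix you propose does not work at the stated level of generality.)
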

\begin{proof}
By directly calculation, we get
\[ 
E(Q)+M(Q)-\epsilon_n^2=E(Q+h_n)+M(Q+h_n)=E(Q)+M(Q)+\mathcal{F}(h_n)+O\left(\| h_n\|^3_{H^1}\right).
\]
The expression \eqref{condition1} of $h$ at $t=0$ yields, in view of \eqref{condition2},
\[\| h_n(0)\|_{H^1}\lesssim (|\alpha_n^+(0)|+||\alpha_n^-(0)|+\| g_n(0)\|_{H^1}).
\]
Since the functions $iQ$ and $\partial_j Q$ are in the kernel of $\mathcal{F}$ for $j\in\{1,\cdots,d\}$, we get
\[
\mathcal{F}(h_n(0))=-2\alpha_n^{+}(0)\alpha_n^-(0)+\mathcal{F}(g_n(0)).
\]
Combining the preceding estimates, we obtain
\[
 2\alpha_n^{+}(0)\alpha_n^-(0)=\epsilon_n^2+\mathcal{F}(g_n(0))+O\left(|\alpha_n^{+}(0)|^3+|\alpha_n^{-}(0)|^3+\| g_n(0)\|_{H^1}^3\right).
 \]
By Theorem \ref{positivity}, there exists a positive constant $c_{d,p}$ such that $\mathcal{F}(g_n(0))\geq c_{d,p}\| g_n(0)\|_{H^1}^2$. This yields for large $n$,
\[
2\alpha_n^{+}(0)\alpha_n^-(0) \geq \epsilon_n^2+c_{d,p}\|g_n(0)\|_{H^1}^2\geq \epsilon_n^2+c_{d,p}[c\|h_n(0)\|_{H^1}^2-|\alpha_n^-(0)|-|\alpha_n^+(0)|],
\]
which concludes the proof of the lemma.
\end{proof}
Our next result is the following Lemma:
\begin{lemma}[Growth on $\lbrack -T_n^-,T_n^+ \rbrack$]
\label{key}
Let us fix $\lambda_1^+$ and $\lambda_1^-$, sufficiently close to $\lambda_1$, such that $\lambda_1^-<\lambda_1<\lambda_1^+$.
There exists a positive constant $K_0$ (depending only on  $\lambda_1^{\pm}$) with the following property.  
Let $M> M_0$ (where $M_0$ is given by Lemma \ref{hnbound}). Let $\eta$ such that
\begin{equation}
\label{bootstrapassumption}
0<\eta<\frac{1}{K_0 [M^3+M^2]}.
\end{equation}
We define
\begin{equation}\label{tn+def}
t_n^+=t_n^+(M,\eta)=\inf\big\{t\geq 0 \;:\;\|h_n(t)\|_{H^1_x}\geq M|\alpha_n^-(t)|\text{ or }|\alpha_n^-(t)|\geq \eta\big\}.
\end{equation}
\begin{equation}\label{tn-def}
t_n^-=t_n^-(M,\eta)=\sup\big\{t\leq 0 \;:\;\|h_n(t)\|_{H^1_x}\geq M|\alpha_n^+(t)|\text{ or }|\alpha_n^+(t)|\geq \eta\big\}.
\end{equation}
Then there exists $\tilde{n}>0$ such that for $n\geq \tilde{n}$, if $|\alpha_n^-(0)|\geq|\alpha_n^+(0)|$, then $t_n^+$ is well-defined, positive, and
\begin{align}
\label{estqqq}
\forall\ t\in \left[0,t_n^+\right), &\quad \lambda_1^-|\alpha_n^-(t)|\leq |\frac{d}{dt}\alpha_n^-(t)|\leq \lambda_1^+|\alpha_n^-(t)|\\
\label{estqqqq}
\forall\ t\in \left[0,t_n^+\right), &\quad \frac{1}{K_0}\left|\alpha_n^-(0)\right|e^{\lambda_1^- t}\leq \left|\alpha_n^-(t)\right|\leq K_0 \left|\alpha_n^-(0)\right|e^{\lambda_1^+ t}\\
&\quad\|h_n(t)\|_{H^1_x} \leq 
    K_0|\alpha_n^-(t)|,\quad \forall\ t\in [0,t_n^+)\label{estqqqqq}.
\end{align}
Similarly, if $|\alpha_n^-(0)|\leq|\alpha_n^+(0)|$, then $t_n^-$ is well-defined, negative, and
\begin{align}
\forall t\in \left(t_n^-,0\right], &\quad \lambda_1^-|\alpha_n^+(t)|\leq |\frac{d}{dt}\alpha_n^+(t)|\leq \lambda_1^+|\alpha_n^+(t)|\label{estqqq1}\\
\forall t\in \left(t_n^-,0\right], &\quad \frac{1}{K_0}\left|\alpha_n^+(0)\right|e^{-\lambda_1^+ t}\leq \left|\alpha_n^+(t)\right|\leq K_0 \left|\alpha_n^+(0)\right|e^{-\lambda_1^- t}\label{estqqqq1}\\
&\quad \|h_n(t)\|_{H_x^1}\leq K_0|\alpha_n^+(t)|,\quad \forall\ t\in (t_n^-,0].\label{estqqqqq1}
\end{align}
\end{lemma}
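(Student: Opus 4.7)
The plan is to derive a system of ODEs for the projection coefficients in \eqref{condition1}, close a continuity/bootstrap argument on $[0,t_n^+)$ using coercivity from the conservation laws together with Theorem \ref{positivity}, and deduce the backward statements from the time-reversal symmetry $v(t,x):=\overline{u_n(-t,x)}$, which swaps the roles of $\alpha_n^+$ and $\alpha_n^-$.  Since $L_+$ and $L_-$ are self-adjoint on $L^2$, the explicit form of $\mathcal F$ yields the skew identity $\mathcal{F}(\mathcal{L}f,g)=-\mathcal{F}(f,\mathcal{L}g)$; combined with $\mathcal{L}e_\mp=\mp\lambda_1 e_\mp$ and $\mathcal{F}(e_+,e_-)=-1$, differentiating $\alpha_n^\pm(t)=-\mathcal{F}(h_n,e_\mp)$ through $\partial_t h_n=-\mathcal{L}h_n-R(h_n)$ gives
\begin{equation*}
\frac{d}{dt}\alpha_n^\pm(t)=\pm\lambda_1\alpha_n^\pm(t)+\mathcal{F}(R(h_n),e_\mp),
\end{equation*}
while projecting onto $iQ,\partial_j Q\in\ker\mathcal L$ gives $|\frac{d}{dt}\gamma_{n,j}(t)|\lesssim\|h_n\|_{H^1}^2$ for $j\in\{0,\ldots,d\}$.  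The nonlinear remainder satisfies $|\mathcal{F}(R(h_n),e_\mp)|\lesssim\|h_n\|_{H^1}^2$ via the quadratic structure of $R$, Sobolev embedding, and the exponential decay of $e_\pm$.

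The coercive control comes from the conservation laws.  Expanding $E(Q+h_n)+M(Q+h_n)=E(Q)+M(Q)-\epsilon_n^2$ to second order and using the orthogonality relations $\mathcal{F}(e_\pm,iQ)=\mathcal{F}(e_\pm,\partial_jQ)=0$, $\mathcal{F}(g_n,e_\pm)=0$, and $\mathcal{F}(\cdot,iQ)=\mathcal{F}(\cdot,\partial_jQ)=0$ (since $iQ,\partial_j Q\in\ker\mathcal L$), one obtains $\mathcal{F}(g_n)=2\alpha_n^+\alpha_n^- -\epsilon_n^2+O(\|h_n\|_{H^1}^3)$.  Theorem \ref{positivity} then gives $\|g_n\|_{H^1}^2\lesssim|\alpha_n^+\alpha_n^-|+\epsilon_n^2+\|h_n\|_{H^1}^3$; integrating the $\gamma_{n,j}$ derivative bound from $\gamma_{n,j}(0)=0$ and combining yields the master estimate
\begin{equation*}
\|h_n(t)\|_{H^1}^2\lesssim|\alpha_n^+(t)|^2+|\alpha_n^-(t)|^2+\epsilon_n^2+\|h_n(t)\|_{H^1}^3.
\end{equation*}

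Focusing on the forward case $|\alpha_n^-(0)|\geq|\alpha_n^+(0)|$, fix $M>M_0$ so that Lemma \ref{hnbound} gives $\|h_n(0)\|_{H^1}<M|\alpha_n^-(0)|$, whence $t_n^+>0$ by continuity. On $[0,t_n^+)$ the defining conditions force $\|h_n\|_{H^1}\leq M\eta$, so the ODE for $\alpha_n^-$ becomes
\begin{equation*}
\left|\tfrac{d}{dt}\alpha_n^--\lambda_1\alpha_n^-\right|\leq C\|h_n\|_{H^1}^2\leq CM^2\eta\,|\alpha_n^-|,
\end{equation*}
which gives \eqref{estqqq} for $\eta$ small relative to $\lambda_1^\pm-\lambda_1$ and $M$, and Gronwall integration then yields \eqref{estqqqq}.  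The same analysis applied to the decaying ODE for $\alpha_n^+$, combined with $|\alpha_n^+(0)|\leq|\alpha_n^-(0)|$ and the opposing exponentials, yields $|\alpha_n^+(t)|\lesssim|\alpha_n^-(t)|$ on $[0,t_n^+)$.

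The key obstacle---and the reason for the cubic smallness threshold $\eta<1/(K_0(M^3+M^2))$---is closing the bootstrap by upgrading the a priori bound $\|h_n\|_{H^1}\leq M|\alpha_n^-|$ to the sharper bound $\|h_n\|_{H^1}\leq K_0|\alpha_n^-|$ with $K_0$ depending only on $\lambda_1^\pm$.  Integrating $|\frac{d}{dt}\gamma_{n,j}|\lesssim M^2|\alpha_n^-|^2$ from $\gamma_{n,j}(0)=0$ and using the exponential growth of $|\alpha_n^-|$ gives $|\gamma_{n,j}(t)|\lesssim M^2|\alpha_n^-(t)|^2$, and Lemma \ref{hnbound} combined with the exponential growth gives $\epsilon_n\lesssim|\alpha_n^-(0)|\lesssim|\alpha_n^-(t)|$.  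Inserting these estimates together with $|\alpha_n^+(t)|\lesssim|\alpha_n^-(t)|$ and $\|h_n\|^3\lesssim M^3\eta\,|\alpha_n^-|^2$ into the master estimate yields
\begin{equation*}
\|h_n(t)\|_{H^1}^2\leq C|\alpha_n^-(t)|^2+C(M^3+M^2)\eta\,|\alpha_n^-(t)|^2,
\end{equation*}
which for $\eta<1/(K_0(M^3+M^2))$ absorbs the second term and produces $\|h_n\|_{H^1}\leq K_0|\alpha_n^-|$ with $K_0$ independent of $M$.  Since $K_0<M$, this strictly improves the defining inequality of $t_n^+$, ruling out exit through $\|h_n\|=M|\alpha_n^-|$ and forcing $|\alpha_n^-(t_n^+)|=\eta$; this establishes \eqref{estqqqqq} and completes the forward case, with the backward statements following from the time-reversal symmetry.
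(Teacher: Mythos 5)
Your overall architecture matches the paper's proof: differentiate the coefficients $\alpha_n^\pm$ and $\gamma_{n,j}$, use the energy/mass expansion and Theorem~\ref{positivity} to control $\|g_n\|_{H^1}$, then close a bootstrap on $[0,t_n^+)$. The reduction of the backward case to the forward one via $\overline{u_n(-t,x)}$ is a clean shortcut (the paper argues both directions in parallel), and the final absorption logic is right.

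There is, however, a genuine gap in the step where you assert
$\left|\tfrac{d}{dt}\gamma_{n,j}(t)\right|\lesssim\|h_n\|_{H^1}^2$
``by projecting onto $iQ,\partial_jQ\in\ker\mathcal{L}$.'' The kernel relation $\mathcal{L}(iQ)=\mathcal{L}(\partial_jQ)=0$ does not make the linear term vanish, because $\mathcal{L}$ is skew-symmetric only with respect to the quadratic form $\mathcal{F}$, not with respect to the $L^2$ pairing $(f,g)_{L^2}=\Re\int f\bar g$. Concretely, for $j\in\{1,\dots,d\}$,
\[
\bigl(\mathcal{L}h_n,\partial_jQ\bigr)_{L^2}=-\int_{\R^d} h_{n,2}\,L_-\partial_jQ\,dx,
\qquad
L_-\partial_jQ=(p-1)Q^{p-1}\partial_jQ\neq0,
\]
(similarly $(\mathcal{L}h_n,iQ)_{L^2}=\int h_{n,1}L_+Q$ with $L_+Q=-(p-1)Q^p\neq0$). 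These are genuinely linear in $h_n$; they do not cancel against the $\dot\alpha_n^\pm$ corrections beyond the $\alpha_n^\pm$-parts, and the $g_n$-part of $h_n$ contributes at order $\|g_n\|_{H^1}$. Thus $|\dot\gamma_{n,j}|\lesssim\|h_n\|_{H^1}$, not $\|h_n\|_{H^1}^2$, and your subsequent bound $|\gamma_{n,j}(t)|\lesssim M^2|\alpha_n^-(t)|^2$ is not justified. The paper handles this precisely: it records $|\dot\gamma_{j,n}|\leq C_3(|\alpha_n^+|+|\alpha_n^-|+\|g_n\|_{H^1}+\|h_n\|^2)$, invokes the coercivity estimate $\|g_n\|_{H^1}+\epsilon_n\lesssim|\alpha_n^\mp|+\|h_n\|^{3/2}$ (your ``master estimate'' in disguise) to convert this into $|\dot\gamma_{j,n}|\lesssim|\dot\alpha_n^\mp|$ via \eqref{estqqq}, and then integrates from $\gamma_{j,n}(0)=0$ using the strict monotonicity of $\alpha_n^\mp$, yielding the linear bound $|\gamma_{j,n}(t)|\leq C_2|\alpha_n^\mp(t)|$ (eq.~\eqref{diffgamma}). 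This weaker estimate still closes the bootstrap: together with \eqref{estimate_compactness2} it gives $\|h_n\|\lesssim|\alpha_n^-|+\|h_n\|^{3/2}$, and $\|h_n\|\leq M\eta$ lets you absorb.

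One smaller point: your displayed ODE $\frac{d}{dt}\alpha_n^\pm=\pm\lambda_1\alpha_n^\pm+\mathcal{F}(R(h_n),e_\mp)$ has the sign on the linear term reversed (it should be $\mp\lambda_1\alpha_n^\pm$, so that $\alpha_n^-$ is the forward-growing mode). You clearly use the correct sign later when writing $|\frac{d}{dt}\alpha_n^--\lambda_1\alpha_n^-|\leq\cdots$, so this is likely a typo, but it should be corrected since the sign drives the monotonicity argument.
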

Taking this lemma for granted, let us prove Theorem \ref{byproc}.
\begin{proof}[Proof of Theorem \ref{byproc}]
Without loss of generality, we only prove \eqref{estq} and \eqref{estqq}, assuming $|\alpha_n^-(0)|\geq|\alpha_n^+(0)|$.
By taking $M=1+\max\{M_0,K_0\}$ in Lemma \ref{key}, we obtain that for any $t\in[0,t_n^+)$,
\[
\|h_n(t)\|_{H^1_x}\leq K_0|\alpha_n^-(t)|<M|\alpha_n^-(t)|,
\] 
which further yields
\[
t_n^+(\eta,M)=\inf\big\{t\geq 0 \;:\;\,|\alpha_n^-(t)|\geq \eta\big\}=T_n^+(\eta).
\]
Now by \eqref{estqqqq} and continuity of 
$\alpha_n$, we see that $T_n^+(\eta)\in (0,+\infty)$, $\alpha_n^-(T_n^+(\eta))=\eta$ for all large $n$. Moreover, as $\alpha_n^-(0)\to0$, there must hold $T_n^+(\eta)\to\infty$.

On the other hand, by \eqref{estqqqq}, we readily obtain that for large $n$,
\[
\frac{1}{K_0}\left|\alpha_n^-(0)\right|e^{\lambda_1^-T_n^+(\eta)}\leq \eta \leq K_0 \left|\alpha_n^-(0)\right|e^{\lambda_1^+T_n^+(\eta)},
\]
which is equal to
\[
\log(K_0)+ \lambda_1^- T_n(\eta)\leq\big|\log |\alpha_n^-(0)|\big|+\log \eta \leq \log(K_0)+ \lambda_1^+ T_n(\eta).
\]
As $\alpha_n^-(0)\to0$,
\[
\frac{1}{\lambda_1^+}\leq \liminf_{n\rightarrow +\infty} \frac{T_n(\eta)}{\big|\log |\alpha_n^-(0)|\big|}\leq \limsup_{n\rightarrow +\infty} \frac{T_n(\eta)}{\big|\log |\alpha_n^-(0)|\big|}\leq\frac{1}{\lambda_1^-}.
\]
Since $\lambda_1^{\pm}$ can be chosen arbitrarily near $\lambda_1$, we can conclude the proof of \eqref{estq}.

Finally, by \eqref{estqqq}, \eqref{estqq} naturally holds  and the proof of Theorem \ref{byproc} is complete. \end{proof}

Now we start to  prove  Lemma \ref{key}.

\begin{proof}[Proof of Lemma \ref{key}]
Without loss of generality, we prove all  estimate \eqref{estqqq}-\eqref{estqqqqq} under the assumption $|\alpha_n^-(0)|\geq|\alpha_n^+(0)|$, and the other side estimates can be easily proved by adapting the following argument. As $\alpha_n^{\pm}(0)$ tends to $0$,  the continuity of $\alpha_n^{\pm}(t)$ and $\|h_n(t)\|_{H^1_x}$, for all large $n$, the times $t_n^{\pm}$ exist and are both non-zero. Furthermore,
\begin{align}
\label{qqest1}
\forall\ n\ \text{ and }\ \forall\ t\in [0,t_n^+), &\quad |\alpha_n^-(t)|\leq \eta\\
\label{qqqest1}
\forall\ n\ \text{ and }\ \forall\ t\in [0,t_n^+), &\quad\|h_n(t)\|_{H^1_x}\leq M\left|\alpha_n^-(t)\right|.
\end{align}

We first prove \eqref{estqqq}. Obviously, it is sufficient to prove
\begin{equation}
\label{inequalitydiff}
\left|\frac{d}{dt}\alpha_n^{-}-\lambda_1\alpha_n^{-}\right|\leq 
    m\left|\alpha_n^{-}(t)\right|, \forall\ t\in [0,t_n^+),
\end{equation} 
where $$m=\min\left\{\lambda_1^2-(\lambda_1^-)^2,(\lambda_1^+)^2-\lambda_1^2\right\}.$$
In fact we will prove a stronger estimate:
\begin{align}\label{strongv}
    \left|\frac{d}{dt}\alpha_n^{\pm}\pm\lambda_1\alpha_n^{\pm}\right|\leq 
    m\left|\alpha_n^{-}(t)\right|, \forall\ t\in [0,t_n^+).
\end{align}
Differentiating  the equality $\alpha_n^{\pm}=-\mathcal{F}(h_n,e_{\mp})$, we get, by equation \eqref{linearized},
\begin{align*}
 &\frac{d}{dt}\alpha_n^{\pm}\pm\lambda_1\alpha_n^{\pm}=-\mathcal{F}(\partial_t h_n\,e_{\mp})\mp \lambda_1\mathcal{F}(h_n,e_{\mp})=-\mathcal{F}(\partial_t h_n\pm\lambda_1h_n,e_{\mp})\\
 &=-\mathcal{F}(R(h_n), e_{\mp})-\mathcal{F}\left(Lh_n\pm\lambda_1h_n,e_{\mp}\right)=-\mathcal{F}(R(h_n), e_{\mp}).   
\end{align*}

Thus there exists a constant $C_1$, independent of all parameters, such that
\begin{equation}
\label{diffiential}
\left|\frac{d}{dt}\alpha_n^{\pm}\pm\lambda_1\alpha_n^{\pm}\right|\leq C_1\|h_n\|_{H^1_x}^2.
\end{equation} 
By \eqref{qqest1} and \eqref{qqqest1},
$$ \left|\frac{d}{dt}\alpha_n^{\pm}\pm\lambda_1\alpha_n^{\pm}\right|\leq C_1M^2 \eta |\alpha_n^{-}|,\quad \forall\ t\in[0,t_n^+).$$
Therefore, if $C_1M^2\eta\leq m$ (which follows from \eqref{bootstrapassumption} if $K_0$ is large enough), the desired estimate \eqref{strongv}(and hence \eqref{inequalitydiff}) holds. 

We next prove \eqref{estqqqq}. First we claim that
\begin{align}\label{zzb2}
\frac{d}{dt}\alpha_n^-(t)\cdot\alpha_n^-(t)>0,\quad \alpha_n^-(t)\alpha_n^-(0)> 0,\quad \forall\ t\in[0,t_n^+), 
\end{align}
which implies that $\alpha_n^-$ is strict monotony  and never change the sign on $[0,t_n^+)$. In fact, by using \eqref{inequalitydiff} we see that  $\frac{d}{dt}\alpha_n^{-}(0)\cdot\alpha_n^-(0)>0$. Therefore, \eqref{zzb2} must hold on some small interval $[0,t_0)$, which  implies that $\alpha_n^-(t_0)\neq0$, and then by \eqref{inequalitydiff}, also implies $\frac{d}{dt}\alpha_n^{-}( t_0)\cdot\alpha_n^-(t_0)>0$. Now the claim \eqref{zzb2} holds by the bootstrap argument.

Combining \eqref{inequalitydiff} and \eqref{zzb2}, we readily obtain
   \[
   \lambda_1^-\alpha_n^{-}(t)\leq \frac{d}{dt} \alpha_n^{-}(t)\leq \lambda_1^+\alpha_n^{-}(t), \forall\ t\in[0,t_n^+),\quad \text{ if } \alpha_n^{-}(0)>0,
   \]
\[
   \lambda_1^+\alpha_n^{-}(t)\leq \frac{d}{dt} \alpha_n^{-}(t)\leq \lambda_1^-\alpha_n^{-}(t), \forall\ t\in[0,t_n^+),\quad \text{ if } \alpha_n^{-}(0)<0,
   \]
which yields \eqref{estqqqq} by invoking $\lambda_1^-|\alpha_n^{-}(0)|\leq |\frac{d}{dt}\alpha_n^{-}(0)|\leq \lambda_1^+|\alpha_n^{-}(0)|$.

Finally, we turn to prove \eqref{estqqqqq}.
Noticing that by \eqref{estqqq} and \eqref{strongv}, the following differential inequality  holds:
   \begin{equation}
      \left|\frac{d}{dt}(e^{\lambda_1t}\alpha_n^{+})\right|= \left|\frac{d}{dt}\alpha_n^{+}+\lambda_1\alpha_n^{+}\right|\leq 
    \left|\frac{d}{dt}\alpha_n^{-}(t)\right|, \forall\ t\in [0,t_n^+),
   \end{equation}   
if we choose $\lambda_1^-,\lambda_1^+$ sufficiently close to $\lambda_1$. Since  $\alpha_n^{-}$ is strict monotony on the interval $[0,t_n^+)$, we readily obtain
\begin{equation}\label{harm2}
    |\alpha_n^{+}(t)|\leq e^{\lambda_1t}|\alpha_n^{+}(t)|<|\alpha_n^-(t)|,\quad \forall\ t\in  [0,t_n^+).
\end{equation}

Next, we show that there exists a constant $C_1>0$, independent of the parameters $M$ and $\eta$, such that for all  $t\in \left[0,t_n^+\right]$,
\begin{gather}
\label{estimate_energy2}
\left\|g_n\right\|_{H^1_x}+\epsilon_n\leq C_1|\alpha_n^-|+C_1\|h_n\|_{H^1_x}^{3/2}\\
\label{estimate_compactness2}
\|h_n\|_{H^1_x}\leq C_1\left(\left|\alpha_n^-\right|+\sum_{j=0}^d \left|\gamma_{j,n}\right|\right).
\end{gather}
In fact, noticing that
\[
E\left(Q+h_n\right)+M(Q+h_n)=E(Q)+M(Q)-\epsilon_n^2
\]
Thus there exists a constant $\tilde{C}>0$ (independent of the parameters) such that
\begin{equation}\label{qest1}
    \left|\mathcal{F}((h_n)+\epsilon_n^2\right|\leq \tilde{C}\|h_n(t)\|_{H^1_x}^{3}.
\end{equation}
Furthermore, by \eqref{condition1} and Lemma \ref{positivity}, we have
    \begin{gather*}
        \mathcal{F}(h_n)=-2\alpha_n^+(t)\alpha_n^{-}(t)+\mathcal{F}\left(g_n\right)\\
        \mathcal{F}\left(g_n\right)\sim \left\| g_n\right\|_{H^1_x}^2
    \end{gather*}
Inserting these estimates into \eqref{qest1} and then using  \eqref{harm2}, it is easy to check that the desired estimate  \eqref{estimate_energy2} holds.

Let us show \eqref{estimate_compactness2}. 
By  \eqref{harm2} and \eqref{estimate_energy2}, for  any $t\in[0,t_n^+)$,
\begin{equation*}
    \left\| h_n\right\|_{H^1_x}\leq C\left[\left|\alpha_n^-\right|+\sum_{j=0}^d \left|\gamma_{j,n}\right|+\left\| g_n\right\|_{H^1_x}\right]\leq C \left[\left|\alpha_n^-\right|+\sum_{j=0}^d \left|\gamma_{j,n}\right|+\|h_n\|_{H^1_x}^{\frac 32}\right]
\end{equation*}
for some constant $C>0$. Recalling  \eqref{qqqest1}, we readily obtain
\begin{equation*}
\left\|h_n\right\|_{H^1_x}\leq 
    C\left[\left|\alpha_n^-\right|+\sum_{j=0}^d \left|\gamma_{j,n}\right|\right]+C\|h_n\|_{H^1_x}M^{1/2}\eta^{1/2}, \ \forall \ t\in[0,t_n^+),
\end{equation*}
then by \eqref{bootstrapassumption}, we obtain \eqref{estimate_compactness2}.

With estimates \eqref{estimate_energy2} and \eqref{estimate_compactness2} in hand, we proceed to  prove \eqref{estqqqqq}. For this, we claim that there exists a (universal) constant $C_2$  such that
\begin{equation}
\label{diffgamma}
\forall\ j\in \{0,\cdots,d\},\; \left|\gamma_{j,n}(t)\right|\leq
    C_2 \left|\alpha_n^-(t)\right|,\ \forall\ t\in[0,t_n^+).
\end{equation} 
Take this claim for granted, let us complete the proof of \eqref{estqqqqq}. Inserting \eqref{diffgamma} into  \eqref{estimate_compactness2}, we see that if we choose $K_0=C_1C_2$(which only depends on $\lambda_1$ if $|\lambda_1^{\pm}-\lambda_1|$ sufficiently small), then the desired estimate \eqref{estqqqqq} naturally holds.

Now let us prove \eqref{diffgamma}. As each $\partial_j Q$ is in the kernel of $\mathcal{L}$, we have
\begin{align*}
\gamma_{0,n}'(t)=\Re\int_{\R^d}  &\partial_th_n(t)\overline{iQ}\ dx=-\Im\int_{\R^d}  R(h_n)Q\ dx-\Im\int_{\R^d}  \mathcal{L}h_n\cdot Q\ dx\\
&=-\Im\int_{\R^d}  R(h_n)Q\ dx-\Im\int_{\R^d} \mathcal{L}(\alpha_n^{+}e_{+}+\alpha_n^{-}e_-+g_n)Q\ dx,    
\end{align*}
and for $j=1\ldots d$,
\begin{align*}
   \gamma_{j,n}'(t)=\Re\int_{\R^d} & \partial_th_n(t)\partial_j Q\ dx=\Re\int_{\R^d}  R(h_n)\partial_j Q\ dx+\Re\int_{\R^d}  \mathcal{L}h_n\cdot \partial_j Q\ dx\\
   &=\Re\int_{\R^d}  R(h_n)\partial_j Q\ dx+\Re\int_{\R^d} \mathcal{L}(\alpha_n^{+}e_{+}+\alpha_n^{-}e_-+g_n)\partial_j Q\ dx. 
\end{align*}
Therefore, by \eqref{estqqq},  \eqref{qqest1} and \eqref{qqqest1}, there exists a (universal) constant $C_3$ such that
\begin{align}
\label{diffgammaj}
\left|\gamma_{j,n}'(t)\right|&\leq C_3 \left(|\alpha^{-}(t)|+\| h_n(t)\|^{3/2}_{H^1}+\| h_n(t)\|^2_{H^1}\right)\notag\\
&\leq C_3 \left(|\alpha^{-}(t)|+[\| h_n(t)\|^{1/2}_{H^1}+\| h_n(t)\|_{H^1}]\| h_n(t)\|_{H^1}\right)\notag\\
&\leq C_3
    \frac{1+M^{3/2}\eta^{1/2}+M^2\eta}{\lambda_1^-}|\frac{d}{dt}\alpha_n^-(t)|,\ \forall\ t\in[0,t_n^+).
\end{align} 

Integrating between $0$ and $t$, and using that $\gamma_{j,n}(0)=0$, and the fact that  $\alpha_n^{-}$ is strict monotony on the interval $[0,t_n^+)$, we can complete the proof of \eqref{diffgamma}.
\end{proof}

\section{Proof of main result}
In this section we complete the proof Theorem \ref{mainthm}. We start with the following lemma, which is based on the classification results below and at the threshold.
\begin{lemma}\label{compactness}
Let $\{u_n\}$ be a sequence of solutions to \eqref{1.1} that satisfies
\begin{equation}\label{aaz}
  M(u_n)=M(Q),\quad  \|u_n\|_{\dot{H}^1}<\|Q\|_{\dot{H}^1},\quad E(u_n)=E(Q)-\epsilon_n^2,\quad \lim_{n\to\infty}\epsilon_n\to0,
\end{equation}
\begin{equation}\label{zzb1}
\left\|u_{n}\right\|_{S(-\infty,0)}=\left\|u_{n}\right\|_{S(0,+\infty)}\underset{n\rightarrow +\infty}{\longrightarrow} +\infty.
\end{equation}
Then there exist sequences $x_n\in\R$ and $\theta_n\in\R$ such that by passing to a subsequence, we have
\[
\lim_{n\to\infty}\|e^{i\theta_n}u_n(0,x+x_n)-Q\|_{H^1_x}=0.
\]
\end{lemma}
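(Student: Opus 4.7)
The plan is to apply a linear profile decomposition to the $H^1$-bounded sequence $\{u_n(0)\}$, classify the associated nonlinear profiles via Theorems~\ref{thm:claener}--\ref{thm:claener2}, and then use the Pythagorean identities for mass and kinetic energy to reduce to a single profile equal to $Q$ with vanishing remainder.

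Since \eqref{aaz} yields that $\{u_n(0)\}$ is bounded in $H^1$, a Keraani-type linear profile decomposition provides, up to extraction of a subsequence, a representation
\[
u_n(0)=\sum_{j=1}^J e^{i\theta_n^j}e^{it_n^j\Delta}\phi^j(\cdot-x_n^j)+w_n^J,
\]
with pairwise orthogonal parameters (each $t_n^j$ either $\equiv 0$ or $\to\pm\infty$), the usual Pythagorean identities for $M$, $\|\cdot\|_{\dot{H}^1}^2$, and $E$, and a remainder obeying $\lim_{J\to\infty}\limsup_{n\to\infty}\|e^{it\Delta}w_n^J\|_{L^\alpha_{t,x}}=0$ for a Strichartz-admissible exponent controlling the scattering norm $S(\R)$. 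To each $\phi^j$ I would associate its nonlinear profile $U^j$: when $t_n^j\equiv 0$, set $U^j(0):=\phi^j$; when $t_n^j\to\pm\infty$, $U^j$ is the unique solution of \eqref{1.1} with free asymptotic state $e^{it\Delta}\phi^j$ as $t\to\mp\infty$. The Pythagorean identities together with \eqref{aaz} give $\|\phi^j\|_{\dot{H}^1}\leq\|Q\|_{\dot{H}^1}$ and $M(\phi^j)\leq M(Q)$, and since $M(u_n)^{1-s_c}E(u_n)^{s_c}\to M(Q)^{1-s_c}E(Q)^{s_c}$ from below, at most one profile can saturate the threshold; every other profile lies strictly in the sub-threshold regime of Theorem~\ref{thm:claener} and hence produces a nonlinear profile with uniformly bounded scattering norm.

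The nonlinear stability theorem for \eqref{1.1}, built on Lemma~\ref{perpo} and the Kato estimate \eqref{Kato}, then furnishes the approximation
\[
u_n(t)=\sum_{j=1}^J e^{i\theta_n^j}U^j\!\left(t-t_n^j,\,\cdot-x_n^j\right)+e^{it\Delta}w_n^J+r_n^J(t),\qquad \|r_n^J\|_{S(\R)}\to 0,
\]
\emph{provided} every $U^j$ scatters in both time directions. Assumption~\eqref{zzb1} rules this out, so at least one nonlinear profile $U^1$ fails to scatter, and by Theorem~\ref{thm:claener2}, $U^1\in\{e^{it}Q,\,Q^{+},\,Q^{-}\}$ modulo symmetries. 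Each of these has mass $M(Q)$, so the conservation $\sum_j M(\phi^j)\leq M(Q)+o_n(1)$ forbids any second threshold profile. Because $Q^{-}$ has finite $S((-\infty,0))$-norm and $Q^{+}$ has finite $S((0,+\infty))$-norm, neither is compatible with the symmetric divergence $\|u_n\|_{S(-\infty,0)}=\|u_n\|_{S(0,+\infty)}\to+\infty$; thus $U^1=e^{it}Q$ modulo symmetries. This fixes $\phi^1=Q$ after absorbing phase and translation into $\theta_n^1,x_n^1$, and forces $t_n^1$ to be bounded, hence WLOG $t_n^1\equiv 0$. Substituting $M(\phi^1)=M(Q)$ and $\|\phi^1\|_{\dot{H}^1}=\|Q\|_{\dot{H}^1}$ back into the mass and kinetic Pythagorean identities makes every remaining profile vanish and gives $\|w_n^J\|_{H^1}\to 0$, yielding the claimed strong convergence.

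The main obstacle is closing the nonlinear stability estimate at the inter-critical scaling $0<s_c<1$: one must propagate smallness simultaneously in an $L^2$-Strichartz norm and an $\dot{H}^{s_c}$-Strichartz norm, which is exactly the role played by Kato's Strichartz estimate \eqref{Kato} together with Lemma~\ref{perpo}. A secondary but easier subtlety is ruling out the coexistence of two distinct threshold profiles (for instance one $Q^{+}$ and one $Q^{-}$), which is handled by the mass-conservation bookkeeping above.
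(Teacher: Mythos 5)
Your proposal follows essentially the same route as the paper's sketch: linear $H^1$ profile decomposition, nonlinear profiles, classification via Theorems~\ref{thm:claener} and~\ref{thm:claener2}, Pythagorean identities, and the stability theory built on Lemma~\ref{perpo} and \eqref{Kato}. The paper's version is organized a little differently (it first argues there is exactly one profile, deduces strong $H^1$-convergence to a limit $v_0$, and then applies Theorem~\ref{thm:claener2} to the solution with data $v_0$, whereas you classify the nonlinear profile directly), but these are cosmetic differences of presentation rather than substance.

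There is one genuine slip worth correcting. You exclude $Q^{+}$ by asserting that it has finite $S((0,+\infty))$-norm; this is false. By the description preceding Theorem~\ref{thm:claener2}, $Q^{+}$ converges to $e^{it}Q$ in $H^1$ as $t\to+\infty$, so its forward scattering norm is infinite (and it is not even global backward in time). The correct reason $Q^{+}$ cannot occur is that it lives in the super-threshold kinetic regime: by Theorem~\ref{thm:claener2}(iii) one has $\|\nabla Q^{+}(0)\|_{L^2}>\|\nabla Q\|_{L^2}$, while the kinetic Pythagorean identity together with $\|\nabla u_n(0)\|_{L^2}<\|\nabla Q\|_{L^2}$ forces $\|\phi^1\|_{\dot H^1}\le \|Q\|_{\dot H^1}$. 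Equivalently, since the only threshold profile satisfies $\|\phi^1\|_{\dot H^1}\le\|Q\|_{\dot H^1}$, parts (i)--(ii) of Theorem~\ref{thm:claener2} already restrict the non-scattering candidates to $Q^{-}$ and $e^{it}Q$; you then rule out $Q^{-}$ exactly as you did, using its finite backward scattering norm against $\|u_n\|_{S(-\infty,0)}\to\infty$. With that correction the argument closes.
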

\begin{proof}[Sketch of the proof]
    We first prove that  by passing to a subsequence, there exists $v_0\in H^1$ and sequences $\tilde{x}_n\in\R$ and $\tilde{\theta}_n\in\R$ such that
    \begin{equation}\label{convergence1}
         \lim_{n\to\infty}\|e^{i\theta_n}u_n(0,x+x_n)-v_0\|_{H^1_x}=0.
    \end{equation}
    The argument to prove this is based on the concentration/compactness argument, which  is now quite standard. So we only give a sketch. By  the linear $H^1$ profile decomposition  to $u_n(0)$,  we can  construct  \emph{nonlinear profiles} and then use Theorem \ref{thm:claener} to argue that there must be exactly one profile, which implies \eqref{convergence1}.

   Now we find that to conclude the proof, it suffices to prove that $v_0=e^{i\theta_0}Q(x+x_0)$ for some $x_0\in\R$ and $\theta_0\in\R$. From \eqref{aaz} and \eqref{convergence1}, we obtain
   \[
   M(v_0)=M(Q),\quad E(v_0)=E(Q),\quad \|v_0\|_{\dot{H}^1}\leq\|Q\|_{\dot{H}^1}
   \]
Moreover, Let $v$ be the maximal-lifespan solution to \eqref{1.1} with $v|_{t=0}=v_0$,  by the local theory, \eqref{zzb1} also yields
\[
    \|v\|_{S(-\infty,0]}=\|v\|_{S[0,\infty)}=+\infty.
\]
   Therefore, by Theorem \ref{thm:claener2}, up to symmetries(not include scaling), $v$ must coincide the $e^{it}Q$, which  directly yields $v_0=e^{i\theta_0}Q(x+x_0)$ for some $x_0\in\R$ and $\theta_0\in\R$. 
\end{proof}

We also need the following lemma:
\begin{lemma}\label{smallness}
    There exist constants $c_0>0$, $C_0>0$ and $0<s_0\leq 1 $ such that for any interval $I$, $0\in I$, $|I|<c_0$ and any sequence $\{u_n\}$ that satisfies \eqref{assumption}--\eqref{condition2}, we have
   \begin{equation}\label{mmmn}
            \|Q-u_n\|_{S(I)}\leq C_0\|u_n(0)-Q\|_{\dot{H}^s}^{s_0}   
   \end{equation}
\end{lemma}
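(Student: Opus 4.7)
I would interpret the scattering-norm quantity in the statement as $\|e^{it}Q-u_n\|_{S(I)}$, consistently with the decomposition $h_n=e^{-it}u_n-Q$ used throughout Section \ref{soliton}. Set $v(t,x)=e^{it}Q(x)$, which is a solution of \eqref{1.1}, and $w_n:=u_n-v$; then $w_n(0)=u_n(0)-Q$ and $\|w_n(0)\|_{\dot H^{s_c}}\to 0$ by \eqref{assumption}. The plan splits into two stages: in Stage 1, use the Kato perturbation Lemma \ref{perpo} to obtain a small, linear bound of $w_n$ in the Kato norm $X(I)$; in Stage 2, interpolate this small bound against a uniform $\dot H^{s_c}$-level Strichartz bound on $w_n$ to reach the diagonal scattering norm $S(I)$ with a H\"older exponent $s_0\in(0,1]$.

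For Stage 1, shrink $c_0$ so that for any interval $I$ with $0\in I$ and $|I|<c_0$,
\[
\|v\|_{X(I)}=|I|^{1/q_1}\|Q\|_{L^{r_1}_x}<c/4,
\]
where $c$ is the constant of Lemma \ref{perpo} (using $Q\in H^1\hookrightarrow L^{r_1}$). For $n$ large we also have $\|u_n(0)-Q\|_{\dot H^{s_c}}<c/4$. A short continuity/bootstrap argument combined with the triangle inequality $\|u_n\|_{X(I)}\le\|v\|_{X(I)}+\|w_n\|_{X(I)}$ then shows $\|u_n\|_{X(I)}<c$, so Lemma \ref{perpo} applies and gives
\[
\|w_n\|_{X(I)}\le C\,\|u_n(0)-Q\|_{\dot H^{s_c}}.
\]

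For Stage 2, observe that by \eqref{assumption} and Proposition \ref{prop1} applied at regularity $s_c$, both $|\nabla|^{s_c}u_n$ and $|\nabla|^{s_c}v$ have uniformly bounded $\dot S^0(I)$-norms on the short interval $I$; in particular, $w_n$ has uniformly bounded $L^{\tilde q}_tL^{\tilde r}_x$-norm for every $\dot H^{s_c}$-admissible pair $(\tilde q,\tilde r)$. The scattering pair $\bigl(\tfrac{(p-1)(d+2)}{2},\tfrac{(p-1)(d+2)}{2}\bigr)$ is the $\dot H^{s_c}$-admissible diagonal pair, so we may pick $(\tilde q,\tilde r)$ so that this diagonal lies strictly between $(1/q_1,1/r_1)$ and $(1/\tilde q,1/\tilde r)$ on the line $2/q+d/r=d/2-s_c$ in the exponent plane. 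H\"older in mixed Lebesgue spaces then yields $s_0\in(0,1)$ with
\[
\|w_n\|_{S(I)}\le\|w_n\|_{X(I)}^{s_0}\,\|w_n\|_{L^{\tilde q}_tL^{\tilde r}_x(I\times\R^d)}^{1-s_0}\lesssim\|u_n(0)-Q\|_{\dot H^{s_c}}^{s_0},
\]
which is the desired bound.

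The main technical point is the interpolation setup in Stage 2: given the Kato pair $(q_1,r_1)$ already tied to the nonlinearity via $p/q_1=1/q_2'$ and $p/r_1=1/r_2'$ from Section 2, one must verify that the $\dot H^{s_c}$-admissible family has enough room to accommodate a second pair $(\tilde q,\tilde r)$ on the opposite side of the diagonal so that $s_0$ is nontrivial. This is a finite check against the Kato admissibility constraints listed before Lemma \ref{perpo}; in $d\ge 3$ it is immediate, while $d\in\{1,2\}$ requires working at the $(2/(1-s))^+$ endpoint. Beyond this bookkeeping, the argument uses only the Kato/Strichartz estimates and the perturbation Lemma \ref{perpo} already in hand.
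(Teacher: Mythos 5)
Your proposal is correct and follows essentially the same route as the paper's proof: apply Lemma \ref{perpo} on a short interval to obtain a small $X(I)$ bound on the difference, get a uniform $\dot S^{s_c}(I)$ bound from Strichartz, and interpolate between the Kato exponent and a second $\dot H^{s_c}$-admissible pair to reach the diagonal scattering norm. The only inessential difference is that the bootstrap in your Stage~1 to verify $\|u_n\|_{X(I)}<c$ can be avoided by taking $u=e^{it}Q$ (rather than $u=u_n$) as the solution whose $X(I)$-norm must be small in Lemma \ref{perpo}, since $\|e^{it}Q\|_{X(I)}=|I|^{1/q_1}\|Q\|_{L_x^{r_1}}$ is controlled directly by $c_0$.
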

\begin{proof}
   Since $\|Q\|_{X(I)}^{q_1}=|I|^{q_1}\|Q\|_{L_x^{r_1}}$, we first choose $c_0\leq c$, where $c$ is defined in Lemma \ref{perpo}, then we have
   \[
   \|h\|_{X(I)}\leq C\|u_n(0)-Q\|_{\dot{H}^{s
   _c}},
   \]
   which, by invoking standard Strichartz estimates, means that
   $\||\nabla|^{s_c}u_n\|_{\dot{S}^0}$ is uniformly bounded (provided $c_0$ sufficiently small). Now \eqref{mmmn} can be obtained from the  interpolation between $X(I)$ and $\||\nabla|^{s_c}u_n\|_{\dot{S}^0}$.
\end{proof}

With the above lemmas in hand, we are ready to prove our main theorem:
\begin{proof}[Proof of Theorem \ref{mainthm}]
 We divide the proof into two steps. We first show the lower bound estimate.

\textbf{Step 1. Lower bound }
We first show
\begin{equation}
\label{part1}
\liminf_{\epsilon\rightarrow 0^+} \frac{\mathcal{I}_{\epsilon}}{\left|\log \epsilon\right|}\geq \frac{2}{\lambda_1}\int_{\R^d} Q^{\frac{(p-1)(d+2)}{2}}\ dx.
\end{equation} 
We argue by contradiction. If \eqref{part1} does not hold, there exists a sequence $\epsilon_n$ which tends to $0$ such that for some $\lambda_0>\lambda_1$
\begin{equation}
\label{xxvc}
\forall n,\quad \frac{2}{\lambda_0}\int_{\R^d} Q^{\frac{(p-1)(d+2)}{2}}\ dx\geq \frac{\mathcal{I}_{\epsilon_n}}{\left|\log \epsilon_n\right|}.
\end{equation} 
Since we have assumed that $\Re\int_{\R^d} \nabla Q \cdot \overline{\nabla e_{\pm}}\ dx=\int_{\R^d} \nabla Q\cdot\nabla \Re e_{+}\ dx>0$,  we can choose positive sequences $a_n$, $b_n$ that converge to zero, and satisfy
\begin{equation*}
   u_{n,0}={(1-b_n)Q-a_n e_+-a_ne_-},\quad \|u_{n,0}\|_{L^2}=\|Q\|_{L^2},\quad \|\nabla u_{n,0}\|_{L^2}<\|\nabla Q\|_{L^2}
\end{equation*}
\begin{equation}
\label{llj}
 \epsilon_n^2=M(Q)+E(Q)-M(u_{n,0})-E\left(u_{n,0}\right),\quad  \quad \epsilon_n^2 \sim b_n\sim a_n^2
\end{equation} 
as $n\to +\infty$. This is possible because for any small $a,b>0$, we have the following identities:
\begin{align*}
    M((1-b)Q-a e_+-ae_-)&=M(Q)+(b^2-2b)M(Q)-2a\Re\int_{\R^d} Q\overline{e_{+}}\ dx+4a^2\int_{\R^d} (\Re e_+)^2\ dx\\
    &=M(Q)+(b^2-2b)M(Q)+4a^2\int_{\R^d} (\Re e_+)^2\ dx
\end{align*}
\begin{align*}
    \|\nabla [(1-b)Q-a e_+-ae_-]\|^2_{L^2}&=\|\nabla Q\|^2_{L^2}+(b^2-2b)\|\nabla Q\|^2_{L^2}\\
    &-2a\Re\int_{\R^d} \nabla Q\cdot \overline{\nabla e_{+}}\ dx+4a^2\int_{\R^d} (\Re e_+)^2\ dx
\end{align*}
\begin{align*}
    &E\left((1-b)Q-a e_+-ae_-\right)+M((1-b)Q-a e_+-ae_-)\\
    =&E(Q)+M(Q)+\mathcal{F}(-bQ-a e_+-ae_-)\\
    =&E(Q)+M(Q)+2a^2\mathcal{F}(e_+,e_-)+b^2\mathcal{F}(Q)+2ab\mathcal{F}(Q,e_+)\\
    =&E(Q)+M(Q)-2a^2+b^2\mathcal{F}(Q)+2ab\mathcal{F}(Q,e_+),
\end{align*}
where the first identity comes from the asymptotic decay property of $Q^{\pm}$(See \cite{campos}):
\[
M(Q^{\pm})=M(Q),\quad \|Q^{\pm}-e^{it}Q\mp e^{it-\lambda_1t}e_+\|_{L^2}\lesssim e^{-2\lambda_1t}\Rightarrow \Re \int Q \overline{e_+}=0.
\]

Now we consider the family of solutions $\{u_n\}_n$ of \eqref{1.1} with initial data $u_{n,0}$. Following the analysis below the Lemma \ref{analysis1},  we can also set $\gamma_{n,j}(0)=0$ for $j\in\{0,\cdots,d\}$. Then by Theorem \ref{byproc}, noting that $|\alpha_n^+(0)|=|\alpha_n^-(0)|\sim |a_n|$, we readily obtain
\begin{equation*}
\lim_{n\rightarrow +\infty} \frac{T_n^+(\eta)}{\left|\log a_n\right|}=\lim_{n\rightarrow +\infty} \frac{T_n^-(\eta)}{\big|\log a_n\big|}=\frac{1}{\lambda_1} 
\end{equation*}
By \eqref{llj},
\begin{equation}
\label{rrt}
\lim_{n\rightarrow +\infty} \frac{T_n^-(\eta)}{\big|\log |\epsilon_n|\big|}=\lim_{n\rightarrow +\infty} \frac{T_n^+(\eta)}{\big|\log |\epsilon_n|\big|}=\frac{1}{\lambda_1}.
\end{equation}
Next we decompose $e^{-it}u_n=Q+h_n$, by triangle inequality,
\[
\|u_n\|_{S(0,T_n^+(\eta))}\geq \|Q\|_{S(0,T_n^+(\eta))}-\|h_n\|_{S(0,T_n^+(\eta))}.
\] 
Obviously,
\[
\|Q\|_{S(0,T_n^+(\eta))}=  (T_n^{+}(\eta))^{\frac{2}{(p-1)(d+2)}}\|Q\|_{L^{\frac{(p-1)(d+2)}{2}}}.
\]
So we only need to consider the contribution of $\|h_n\|_{S(0,T_n^+(\eta))}$(provided $\eta$ sufficiently small). As  $\| h_n\|_{H^1_x}\leq K_0\eta$ on $(0,T_n^+(\eta))$,  we can  split the interval $(0,T_n^+(\eta))$ into finitely many subintervals at length of $c_0$ given by Lemma \ref{smallness}), provided $\eta$ sufficiently small. Then, by applying Lemma \ref{smallness}, we see that $h_n$ has a scattering norm with the size of at most $ C_0K_0\eta$ on each such subinterval, where $C_0$ is an universal constant, and as a consequence, $h_n$ satisfies the following estimate on the entire interval $(0, T_n^+(\eta))$: 
\[
\|h_n\|_{S(0,T_n^+(\eta))}^{\frac{(p-1)(d+2)}{2}}\lesssim\left(\frac{T_n^+(\eta)}{c_0}+1\right)\eta^{\frac{(p-1)(d+2)}{2}s_0}\lesssim \eta^{\frac{(p-1)(d+2)s_0}{2}}T_n^+(\eta).
\]
Combining the preceding estimates, we readily obtain
\[ \int_0^{T_n^+(\eta)} \int_{\R^d}|u_n|^{\frac{(p-1)(d+2)}{2}} \ dxdt\geq T_n^{+}(\eta)\left[\|Q\|_{L^\frac{(p-1)(d+2)}{2}}-C\eta^{s_0}\right]^{\frac{(p-1)(d+2)}{2}}
\]
for an universal constant $C$. Hence with \eqref{rrt},
\begin{equation*}
\liminf_{n\rightarrow +\infty}\frac{1}{|\log \epsilon_n|}\int_0^{+\infty} \int_{\R^d}|u_n|^{\frac{(p-1)(d+2)}{2}} \ dxdt
\geq \frac{1}{\lambda_1}\left[\|Q\|_{L^\frac{(p-1)(d+2)}{2}}-C\eta^{s_0}\right]^{\frac{(p-1)(d+2)}{2}}.
\end{equation*} 
Letting $\eta$ tends to $0$ we derive 
\begin{equation}\label{llm1}
    \liminf_{n\rightarrow+\infty} \frac{1}{|\log \epsilon_n |} \int_0^{+\infty} |u_n|^{\frac{(p-1)(d+2)}{2}}\ dxdt\geq \frac{1}{\lambda_1}\|Q\|_{L^\frac{(p-1)(d+2)}{2}}^{\frac{(p-1)(d+2)}{2}}.
\end{equation}
Repeating the above argument, one can also prove the analogous estimate for the negative time direction:
\begin{equation}\label{llm2}
    \liminf_{n\rightarrow+\infty} \frac{1}{|\log \epsilon_n |} \int_{-\infty}^{0} |u_n|^{\frac{(p-1)(d+2)}{2}}\ dxdt\geq \frac{1}{\lambda_1}\|Q\|_{L^\frac{(p-1)(d+2)}{2}}^{\frac{(p-1)(d+2)}{2}}.
\end{equation}
Now  \eqref{llm1} and \eqref{llm2} contradict \eqref{xxvc}.

\textbf{Step 2. Upper bound}

We next show the upper bound on $\mathcal{I}_{\epsilon}$, i.e that 
\begin{equation}
\label{part2}
\limsup_{\epsilon\rightarrow 0^+} \frac{\mathcal{I}_{\epsilon}}{\left|\log \epsilon\right|}\leq \frac{2}{\lambda_1}\int_{\R^d} Q^{\frac{(p-1)(d+2)}{2}}.
\end{equation}

For this we will show that if $\epsilon_n>0$ is a sequence that goes to $0$ and $\{u_n\}$ a sequence of solutions of \eqref{1.1} such that
\begin{equation*}
\|u_{n}(0)\|_{L^2}^{(1-s_c)/s_c}\| u_{n}(0)\|_{\dot{H}^1}<\|Q\|_{L^2}^{(1-s_c)/s_c}\| Q\|_{\dot{H}^1},
\end{equation*} 
\begin{equation*}
    M(u_n)^{(1-s_c)/s_c}E(u_n)= M(Q)^{(1-s_c)/s_c}(E(Q)-\epsilon^2),
\end{equation*}
then 
\begin{equation}
\label{CV}
\limsup_{n\rightarrow +\infty} \frac{1}{\left|\log \epsilon_n\right|}\int_{\R\times\R^d} |u_n|^{\frac{(p-1)(d+2)}{2}}\ dxdt\leq \frac{2}{\lambda_1}\int_{\R^d} Q^{\frac{(p-1)(d+2)}{2}}\ dx.
\end{equation}
After rescaling and time-translating,  we may assume
\begin{equation*}
  M(u_n)=M(Q),\quad  \|u_n\|_{\dot{H}^1}<\|Q\|_{\dot{H}^1},\quad E(u_n)=E(Q)-\epsilon_n^2, \label{final20}
\end{equation*}
\begin{equation}
\label{final2}
\left\|u_{n}\right\|_{S(-\infty,0)}=\left\|u_{n}\right\|_{S(0,+\infty)}\underset{n\rightarrow +\infty}{\longrightarrow} +\infty.
\end{equation}
Furthermore, by Lemma \ref{compactness} and the analysis below Lemma \ref{analysis1}, after  translating $u_n$ in space and preforming a phase rotation if necessary, we can  assume
\[
\lim_{n\rightarrow+\infty} \|u_n(0)-Q\|_{H^1_x}=0,\quad \gamma_{n,j}(0)=0,\quad \forall\ j\in\{0,\cdots,d\}.
\] 

Fix a small $\eta>0$, and consider $T_n^+(\eta), T_n^-(\eta)$ defined by Theorem \ref{byproc}. We first show that there exists a constant $C>0$ such that
\begin{equation*}
\limsup_{n\rightarrow +\infty} \frac{1}{|\log \epsilon_n|}\int_{T_n^-(\eta)}^{T_n^+(\eta)} \int_{\R^d} |u_n|^{\frac{(p-1)(d+2)}{2}}\ dxdt\leq \frac{2}{\lambda_1}\left[\|Q\|_{L^{\frac{(p-1)(d+2)}{2}}}+C\eta^{s_0}\right]^{\frac{(p-1)(d+2)}{2}}.
\end{equation*}
We only devote to prove
\begin{equation}
\label{final3}
\limsup_{n\rightarrow +\infty} \frac{1}{|\log \epsilon_n|}\int_{0}^{T_n^+(\eta)} \int_{\R^d} |u_n|^{\frac{(p-1)(d+2)}{2}}\ dxdt\leq \frac{1}{\lambda_1}\left[\|Q\|_{L^{\frac{(p-1)(d+2)}{2}}}+C\eta^{s_0}\right]^{\frac{(p-1)(d+2)}{2}},
\end{equation}
since the proof of the analogy of this estimate for the negative time is completely similar.

We decompose $u_n$ as
\[
e^{-it}u_n(t)=Q+h_n=Q+\alpha_n^+(t)e_++\alpha_n^-(t)e_-+\gamma_{n,0}(t)iQ+\sum_{j=1}^d\gamma_{n,j}\partial_jQ+g_n,\quad g_n\in\mathcal{B}^{\bot}.
\]
Noticing that there exists $\theta_n\in\R$ so that $e^{i\theta_n}u_n(0)=\overline{u_n(0)}$, replacing $u_n$ by $e^{i\theta_n}u_n$  if necessary, we can also assume
\begin{equation*}
|\alpha_n^-(0)|\geq |\alpha_n^+(0)|.
\end{equation*} 

First, one may easily obtain
\begin{equation}\label{xxc}
    \int_0^{T_n^+(\eta)} \int_{\R^d}|u_n|^{\frac{(p-1)(d+2)}{2}}\ dxdt \leq  T_n^+(\eta)\left[\|Q\|_{L^{\frac{(p-1)(d+2)}{2}}}+C\eta^{s_0}\right]^{\frac{(p-1)(d+2)}{2}}
\end{equation}
by adopting the argument in Step 1.  Therefore,
\[
\limsup_{n\rightarrow+\infty} \frac{1}{T_n^+(\eta)}\int_0^{T_n^+(\eta)} \int_{\R^d}|u_n|^{\frac{(p-1)(d+2)}{2}} \ dxdt\leq
\left[\|Q\|_{L^{\frac{(p-1)(d+2)}{2}}}+C\eta^{s_0}\right]^{\frac{(p-1)(d+2)}{2}}
\]
On the other hand,  Lemma \ref{hnbound} yields 
\begin{equation*} 
\epsilon_n\leq M_0|\alpha_n^-(0)|
\end{equation*} 
for any sufficiently large $n$. Therefore, by Theorem \ref{byproc}, 
\begin{equation}
\label{estimate_eps}
\limsup_{n\rightarrow+\infty} \frac{T_n^+(\eta)}{|\log \epsilon_n|}\leq \frac{1}{\lambda_1},
\end{equation}
which together with \eqref{xxc} implies \eqref{final3}. 

Finally, we claim that if $\eta$ is small enough, there exists a constant $C(\eta)>0$ such that for large $n$
\begin{equation}
\label{final4} 
\|u_n\|_{S(-\infty,T_n^-(\eta))}+\|u_n\|_{S(T_n^+(\eta),+\infty)}\leq C(\eta), 
\end{equation}
which is enough to prove \eqref{part2} and hence conclude the proof.
To see this,  we observe that \eqref{final3} and and \eqref{final4},
\begin{align*}
\limsup_{n\rightarrow+\infty} \frac{1}{|\log \epsilon_n|}\|u_n\|_{S(0,+\infty)}^{\frac{(p-1)(d+2)}{2}}&=\limsup_{n\rightarrow+\infty} \frac{1}{|\log \epsilon_n|}\|u_n\|_{S(0,T_n^+(\eta))}^{\frac{(p-1)(d+2)}{2}}\\
&\leq \frac{1}{\lambda_1}\left[\|Q\|_{L^{\frac{(p-1)(d+2)}{2}}}+C\eta^{s_0}\right]^{\frac{(p-1)(d+2)}{2}}
\end{align*}
and
\begin{align*}
\limsup_{n\rightarrow+\infty} \frac{1}{|\log \epsilon_n|}\|u_n\|_{S(-\infty,0)}^{\frac{(p-1)(d+2)}{2}}&=\limsup_{n\rightarrow+\infty} \frac{1}{|\log \epsilon_n|}\|u_n\|_{S(-\infty,T_n^-(\eta))}^{\frac{(p-1)(d+2)}{2}}\\
&\leq \frac{1}{\lambda_1}\left[\|Q\|_{L^{\frac{(p-1)(d+2)}{2}}}+C\eta^{s_0}\right]^{\frac{(p-1)(d+2)}{2}}.
\end{align*}
Letting $\eta\to0$, we readily obtain
\[
\limsup_{n\rightarrow+\infty} \frac{1}{|\log \epsilon_n|}\|u_n\|_{S(0,+\infty)}^{\frac{(p-1)(d+2)}{2}}\leq \frac{2}{\lambda_1}\|Q\|_{L^{\frac{(p-1)(d+2)}{2}}}^{\frac{(p-1)(d+2)}{2}},
\]
which shows, in view of \eqref{final2}, the desired estimate \eqref{CV}.

It therefore remains to prove \eqref{final4}. We will argue by contradiction.  Without loss of generality, we may assume that 
\[
\limsup_{n\to+\infty}\|u_n\|_{S(T_n^+(\eta),+\infty)}=+\infty.
\]
By passing to the subsequence, 
\begin{equation}
\label{final5}
\|u_n\|_{S(T_{n}^+(\eta),+\infty)} \to +\infty \quad \text{ as }\quad n\to+\infty.
\end{equation}
Furthermore, by \eqref{final2} 
\begin{equation}
\label{final6} 
\|u_{n}\|_{S(-\infty,T_n^+(\eta))}\to +\infty \quad \text{ as }\quad n\to+\infty.
\end{equation} 
In view of \eqref{final5} and \eqref{final6}, Lemma \ref{compactness}  implies that there exist parameters $\theta_n\in \R$ and $x_n\in \R^d$ such that(up to the subsequence) 
\begin{equation*}
\lim_{n\rightarrow +\infty} \left\| e^{i\theta_n}u_n\left(T_{n}^+(\eta),x-x_n\right)-Q\right\|_{H_x^1}=0.
\end{equation*}
Since for each $n$, $e^{it}\tilde{Q}=e^{-i\theta_n+t}Q(x+x_n)$ is a solution of \eqref{1.1},  we have
\[
i\partial_tu(T_n^+(\eta))=-\Delta[u-\tilde{Q}](T_n^+(\eta))-[|u|^{p-1}u-|\tilde{Q}|^{p-1}\tilde{Q}](T_n^+(\eta))-\tilde{Q}.
\]
Now by the Sobolev embedding,  we arrive at
\begin{align*}
    &\|\partial_t u_n(T_n^+(\eta))-iu_n(T_n^+(\eta))\|_{H_x^{-1}}\leq\|\partial_t u_n(T_n^+(\eta))-i\tilde{Q}\|_{H_x^{-1}}+\| u_n(T_n^+(\eta))-\tilde{Q}\|_{H_x^{-1}}\\
    &\leq \| u_n(T_n^+(\eta))-\tilde{Q}\|_{H_x^{-1}}+\|\Delta [u_n-\tilde{Q}](T_n^+(\eta))\|_{H_x^{-1}}+\|[|u_n|^{p-1}u_n-|\tilde{Q}|^{p-1}\tilde{Q}](T_n^+(\eta))\|_{L_x^{\frac{(p-1)(d+2)}{(p-1)(d+2) - 2}}}\\
    &\leq  \| [u_n-\tilde{Q}](T_n^+(\eta))\|_{H_x^{1}}+\|u_n(T_n^+(\eta))-\tilde{Q}\|_{H_x^1}[\|u_n\|^{p-1}_{H^1_x}+\|Q\|^{p-1}_{H^1_x}]\\
    &\to 0
\end{align*}
as $n\to\infty$. Since $e_+$ is Schwartz function, we get
\[
\mathcal{F}(e^{-iT_n^+(\eta)}[\partial_tu_n(T_n^+(\eta))-iu(T_n^+(\eta))],e_+)\to 0
\]
 and hence
\begin{align*}
\frac{d}{dt}\alpha_n^-(T_n^+(\eta))=-\frac{d}{dt}\mathcal{F}(e^{-it}u_n-Q,e_+)(T_n^+(\eta))&=-\mathcal{F}(e^{-iT_n^+(\eta)}[\partial_tu_n(T_n^+(\eta))-iu(T_n^+(\eta))],e_+)\\
&\to 0.
\end{align*}
This is a contradiction, since Theorem \ref{byproc} yields
\[
\liminf_{n\to \infty} |\alpha_n^-(T_n^+(\eta))|\geq \lambda_1\eta.
\]
Combining \eqref{part1} and \eqref{part2}, we can conclude the proof.
\end{proof}


\begin{thebibliography}{9}

\bibitem{akahori}
T. Akahori and H.Nawa, \emph{Blowup and scattering problems for the nonlinear Schr\"odinger equations}, Kyoto J. Math {\bf 53} (2013), no. 3, 629-672.

\bibitem{arora}
A. Arora, B. Dodson and J. Murphy, \emph{Scattering below the ground state for the 2$d$ radial nonlinear Schr\"odinger equation}, Proc. Amer. Math. Soc. {\bf 148} (2020),  1653-1663.



\bibitem{campos}L. Campos, L. Farah and S. Roudenko, \emph{Threshold solutions for the nonlinear Schr\"odinger equation}, Rev. Mat. Iberoam. {\bf 38} (2022),  1637-1708




\bibitem{dodson3} B. Dodson, \emph{Global well-posedness and scattering for the focusing, cubic Schr\"odinger equation in dimension $ d = 4 $}, Ann. Sci. Ecole Norm. S., serie 4, \textbf{52} (2019), 139--180. 

\bibitem{D2}
B. Dodson, \emph{A determination of the blowup solutions to the focusing NLS with mass equal to the mass of the soliton}, Ann. PDE {\bf 9} (2023), no. 1, Paper No. 3, 86 pp.


\bibitem{D1}
B. Dodson, \emph{A determination of the blowup solutions to the focusing, quintic NLS with mass equal to the mass of the soliton}, Anal. PDE {\bf 17} (2024), no. 5, 1693-1760.

\bibitem{dodson1} B. Dodson and J. Murphy, \emph{A new proof of scattering below the ground state for the 3D radial focusing cubic NLS}, Proc. Amer. Math. Soc. {\bf 145} (2017),  4859-4867.

\bibitem{dodson2} B. Dodson and J. Murphy, \emph{A new proof of scattering below the ground state for the non-radial focusing NLS}, Math. Res. Lett. {\bf 25} (2018), no. 6, 1805-1825.

\bibitem{du}
D. Du, Y. Wu and K. Zhang, \emph{On blow-up criterion for the nonlinear Schr\"odinger equation, Discrete Contin. Dyn. Syst}. {\bf 36} (2016), no. 7, 3639--3650.


\bibitem{duyckaerts2}
T. Duyckaerts, J. Holmer and S. Roudenko, \emph{Scattering for the non-radial 3D cubic nonlinear Schr\"odinger equation}, Math. Res. Lett. {\bf 15} (2008), no.~6, 1233-1250.



\bibitem{duyckaerts1} T. Duyckaerts and F. Merle, \emph{Dynamic of threshold solutions for energy-critical NLS}, Geom. Funct. Anal. {\bf 18} (2009),  1787-1840.

\bibitem{duyckaerts} T. Duyckaerts and F. Merle, \emph{Scattering norm estimate near the threshold for energy-critical focusing semilinear wave equation}, Indiana Univ. Math. J. {\bf 58} (2009), no. 4, 1971-2001.

\bibitem{duyckaerts4}
T. Duyckaerts and S. Roudenko, \emph{Threshold solutions for the focusing 3D cubic Schr\"odinger equation}, Rev. Mat. Iberoam. {\bf 26} (2010), no.~1, 1-56.

\bibitem{DMR}
T. Duyckaerts, F. Merle and S. Roudenko, \emph{Maximizers for the Strichartz norm for small solutions of mass-critical NLS}, Ann. Sc. Norm. Super. Pisa Cl. Sci. (5) {\bf 10} (2011), no. 2, 427--476.



\bibitem{fang}
D. Fang, J. Xie and T. Cazenave, \emph{Scattering for the focusing energy-subcritical nonlinear Schr\"odinger equation}, Sci. China Math. {\bf 54} (2011),  2037-2062.

\bibitem{Fos05}
D. Foschi, \emph{Inhomogeneous {S}trichartz estimates}, J. Hyper. Diff.
  Eq. \textbf{2} (2005), no.~1, 1--24.

\bibitem{GinibreVelo} J. Ginibre and G. Velo, \emph{Smoothing properties and retarded estimates for some dispersive
evolution equations.} Comm. Math. Phys. \textbf{144} (1992), 163–188.

\bibitem{guevara}
C. Guevara, \emph{Global behavior of finite energy solutions to the $d$-dimensional focusing nonlinear Schr\"odinger equation}, Appl. Math. Res. Express. AMRX {\bf 2014},  177-243.

\bibitem{gustafson}
S. Gustafson and T. Inui, \emph{Blow-up or grow-up for the threshold solutions to the nonlinear Schr\"odinger equation}, Dyn. Partial Differ. Equ. {\bf 20} (2023), no.~3, 213--225,

\bibitem{Holmer1}
J. Holmer and S. Roudenko, \emph{On blow-up solutions to the 3{D} cubic nonlinear
Schr\"{o}dinger equation}, Appl. MathRes. Express. AMRX(2007), Art. ID abm004, 31.

\bibitem{holmer}
J. Holmer and S. Roudenko, \emph{A sharp condition for scattering of the radial 3D cubic nonlinear Schr\"odinger equation}, Comm. Math. Phys. {\bf 282} (2008), no. 2, 435-467.


\bibitem{Holmer2} 

J. Holmer and S. Roudenko, \emph{Divergence of infinite-variance nonradial solutions to the 3d nls equation}, Comm. Partial Differential Equations {\bf 35} (2010), no.~5, 878--905.

\bibitem{Kato87}
T. Kato, \emph{On nonlinear {S}chr{\"o}dinger equations}, Ann. Inst. H.
  Poincar{\'e} Phys. Th{\'e}or. \textbf{46} (1987), no.~1, 113--129.





\bibitem{KeelTao}  M. Keel and T. Tao, \emph{Endpoint Strichartz estimates.} Amer. J. Math. \textbf{120} (1998), no. 5, 955--980

\bibitem{M}
F. Merle, \emph{Determination of blow-up solutions with minimal mass for nonlinear Schr\"odinger equations with critical power},
Duke Math. J. {\bf 69} (1993), no.2, 427-454.

\bibitem{kenig-merle1} C. E. Kenig and F. Merle, \emph{Global well-posedness, scattering and blow up for the energy-critical, focusing, nonlinear
Schr\"odinger equation in the radial case}, Invent. Math. \textbf{166} (2006), 645-675.






\bibitem{KV1} R. Killip and M. Vi\c{s}an, \emph{The focusing energy-critical nonlinear Schr\"odinger equation in dimensions five and higher.} Amer. J. Math., \textbf{132} (2010), 361--424. 

\bibitem{li1} D. Li and X. Zhang, \emph{Dynamics for the energy critical nonlinear Schr\"odinger equation in high dimensions}, J. Funct. Anal. {\bf 256} (2009),  1928--1961. 



\bibitem{ma}
    Z. Ma, C. Miao, J. Murphy and J. Zheng, \emph{Dynamics of subcritical threshold solutions for the 4d energy-critical NLS}, preprint, arXiv:2508.02608.


\bibitem{Strichartz} R. Strichartz, \emph{Restrictions of Fourier transforms to quadratic surfaces and decay of solutions of wave equations.} Duke Math. J. \textbf{44} (1977), no. 3, 705--714.





\bibitem{wei83}
M. I. Weinstein, \emph{Nonlinear {S}chr\"odinger equations and sharp interpolation
  estimates.} Comm. Math. Phys. \textbf{87}, 4 (1982/83), 567--576.

\end{thebibliography}
\end{document}